\documentclass{amsart}

\usepackage{hyperref}
\usepackage{graphicx} % Required for inserting images
\usepackage{amsthm}
\usepackage{amsmath}
\usepackage{amssymb}
\usepackage{verbatim}
\usepackage[colorinlistoftodos]{todonotes}

\newtheorem{defi}{Definition}[section]
\newtheorem{theorem}[defi]{Theorem}
\newtheorem{fact}[defi]{Fact}
\newtheorem{remark}[defi]{Remark}
\newtheorem{lemma}[defi]{Lemma}
\newtheorem{corollary}[defi]{Corollary}
\newtheorem{proposition}[defi]{Proposition}

\newcommand{\Ind}{
 \setbox0=\hbox{$x$}\kern\wd0\hbox to 0pt{\hss$
 \mid$\hss}\lower.9\ht0\hbox to 0pt{\hss$\smile$\hss}\kern\wd0
}
\newcommand{\indep}[3]{
 #1\mathop{\mathpalette\Ind{}}_{#2}#3
}

\newcommand{\Notind}{
 \setbox0=\hbox{$x$}\kern\wd0\hbox to 0pt{\mathchardef
 \nn=12854\hss$\nn$\kern1.4\wd0\hss}\hbox to 0pt{\hss$\mid$\hss}\lower.9\ht0
 \hbox to 0pt{\hss$\smile$\hss}\kern\wd0
}
\newcommand{\depen}[3]{
 #1\mathop{\mathpalette\Notind{}}_{#2}#3
}

\newcommand{\acl}{\mathrm{acl}}
\newcommand{\dcl}{\mathrm{dcl}}
\newcommand{\tp}{\mathrm{tp}}

\title{Ampleness in the Farey graph}
\author{Zahra Mohammadi Khangheshlaghi \& Rizos Sklinos}
\date{\today}
\thanks{\\ The first-named author was funded by the Deutsche Forschungsgemeinschaft under Germany's Excellence Strategy EXC 2044/2--390685587, Mathematics M\"unster: Dynamics--Geometry--Structure. She gratefully acknowledges the hospitality of the Academy of Mathematics and Systems Science, Chinese Academy of Sciences, where part of this work was carried out. \\ \\
The second-named author was supported by the National Natural Science Foundation of China (NSFC), grant no.~12350610234.}

\begin{document}

\begin{abstract}
We show that the first-order theory of the Farey graph is $1$-ample but not $2$-ample. Along the way, we prove that it weakly eliminates imaginaries and observe that forking is trivial. 
\end{abstract}

\maketitle

\section{Introduction}
The ample hierarchy is a way to measure how complicated the geometry of forking independence is in a stable theory. ``Abelian structures", e.g. Abelian groups and modules, on the one hand are considered the simplest and are placed at the first-level of this hierarchy, while infinite fields have maximum complexity \cite{PillayGeomFork}. The hierarchy is particularly meaningful as it has been shown that it does not collapse \cite{Tent2014Ample, BaudischMartinPizarroZiegler2014Ample}, i.e. for each $n<\omega$, there exists a first-order theory $T_n$ such that $T_n$ is $n$-ample but not $n+1$-ample. Nevertheless, the model-theoretic content of ampleness remains only partially understood.

The starting point for the present work is provided by the $\omega$-stability of the theory of the Farey graph \cite{Farey_Graph}, \cite{DiSarloKoberdaDeLaNuez2023}. In the present paper we will use the recent understanding of forking independence and of algebraic closure in the above mentioned theory in order to prove: 

\begin{theorem}
The first-order theory of the Farey graph is CM-trivial, i.e. not $2$-ample. 
\end{theorem}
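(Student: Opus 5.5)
We will use the characterization of CM-triviality in terms of algebraic closure in $T^{eq}$: $T$ is CM-trivial iff for all real tuples (equivalently, all sets in $\mathbb{M}^{eq}$) $A\subseteq B$ and every $c$ with $\acl^{eq}(Ac)\cap\acl^{eq}(B)=\acl^{eq}(A)$, we have $\mathrm{Cb}(c/\acl^{eq}(A))\subseteq\acl^{eq}(\mathrm{Cb}(c/\acl^{eq}(B)))$. Equivalently, it suffices to show that whenever $\indep{c}{A}{B}$ fails, this failure is witnessed inside $\acl^{eq}(Ac)\cap\acl^{eq}(B)$: if $\acl^{eq}(Ac)\cap\acl^{eq}(B)=\acl^{eq}(A)$ for $A\subseteq B$, then $\indep{c}{A}{B}$. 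The first step is to pass from $\mathbb{M}^{eq}$ to the home sort. For this we establish weak elimination of imaginaries (as announced in the abstract). Then every imaginary is interalgebraic with a real finite tuple, so $\acl^{eq}$ of a set is controlled by $\acl$ of real sets, and it is enough to prove the criterion for real sets $A\subseteq B$ and a real tuple $c$, with $\acl$ in place of $\acl^{eq}$.

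Next we bring in the available description of forking and of $\acl$ in the Farey graph. By the cited description, $\acl(X)$ is obtained from $X$ by closing under the graph-theoretic operations intrinsic to the Farey graph, namely adding the (at most two) common neighbours of adjacent pairs, i.e.\ completing triangles, and adding vertices lying on the finitely many geodesics/``fans'' between elements, within the same connected component. Forking is \emph{trivial}: $\indep{c}{A}{B}$ iff $\indep{c_i}{A}{b}$ for every coordinate $c_i$ of $c$ and every $b\in B$. More concretely, for algebraically closed $A$, we expect $\depen{c}{A}{B}$ to hold exactly when some element of $\acl(Ac)\setminus\acl(A)$ lies in $\acl(AB)$, or when some $c_i$ becomes connected to $B$ over $A$ in a way forced by the combinatorics of the Farey graph (finite distance in a new component). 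I would first record this precise characterization as a lemma.

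The core argument then runs as follows. Let $A=\acl(A)\subseteq B=\acl(B)$, and suppose $\acl(Ac)\cap B=A$. Using triviality of forking, we reduce to showing $\indep{c_i}{A}{b}$ for a single coordinate $c_i$ and a single $b\in B$. If this failed, then by the lemma $c_i$ and $b$ would be linked over $A$ by a finite configuration. The Farey graph is uniquely ``tree-like'': every finite path between two vertices must pass through the separating edges, and the triangles and fans between vertices are definable, hence algebraic. So the linking configuration yields a vertex algebraic over $Ac_i$ that also lies in $\acl(Ab)\subseteq B$ but not in $A$. This contradicts $\acl(Ac)\cap B=A$.

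The main obstacle is this step: extracting a common algebraic point from a dependence. Concretely, we must show that any path from $c_i$ to $b$ in a Farey-graph component meets an $A$-definable or $Ac_i$-algebraic cut vertex/edge. We would handle it by using the planar structure: removing an edge disconnects the Farey graph, and geodesics between two vertices pass through a finite, canonically determined set of vertices, namely the ladder of triangles crossed. This set is therefore contained in $\acl$ of the endpoints together with $A$. We would also need to treat separately the cases where $c_i$ lies in a different component from $A$, which covers new copies of the Farey graph in saturated models. The weak elimination of imaginaries step is the other place where real work is needed, and we would prove it via the standard criterion using the description of types and of $\acl$.
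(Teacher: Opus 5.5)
Your reduction in the first paragraph is where the proof breaks. You propose to verify that, for algebraically closed $A\subseteq B$, the condition $\acl(Ac)\cap B=A$ forces $\indep{c}{A}{B}$. This criterion is not equivalent to CM-triviality. It is a reformulation of one-basedness, i.e.\ of the failure of $1$-ampleness: applying it with $A=\acl(c)\cap B$ gives $\indep{c}{\acl(c)\cap B}{B}$, and conversely. It does imply CM-triviality, since $\indep{c}{A}{B}$ gives $\mathrm{Cb}(c/B)=\mathrm{Cb}(c/A)$. However, it is false in $Th(FG)$, which is $1$-ample.

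Concretely, let $c,b$ be adjacent vertices of one Farey copy, $A=\emptyset$ and $B=\{b\}$. By Fact~\ref{FGAlc} singletons are algebraically closed, so $\acl(Ac)\cap B=\{c\}\cap\{b\}=\emptyset=A$. Yet $c$ forks with $b$ over $\emptyset$ by Fact~\ref{FGFind}.

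So the step you flag as the main obstacle cannot be carried out: there is no way to extract, from a dependence between $c_i$ and $b$ over $A$, a point of $\acl(Ac_i)\cap\acl(Ab)$ outside $A$. The triangles and ``ladders'' joining $c$ to $b$ lie in $\acl(c,b)$ but in neither $\acl(c)$ nor $\acl(b)$.

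Your picture of $\acl$ is also inaccurate. Algebraic closure does not just add finitely many triangle or geodesic vertices. A single point is algebraically closed, while two distinct points of one Farey copy already have the whole (infinite) copy in their algebraic closure. The tree-like structure governing $\acl$ and forking is the incidence forest $G_{\mathrm{tree}}$, not the internal geometry of a single Farey graph.

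A correct argument has to use the extra data in the CM configuration, which your two-set criterion throws away. In canonical-base form you must allow $\depen{c}{A}{B}$ and still show $\mathrm{Cb}(c/A)\subseteq\acl^{eq}(\mathrm{Cb}(c/B))$.

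The paper instead works directly with a putative $2$-ample triple $\bar a_0,\bar a_1,\bar a_2$ over $A$, using real $\acl$ thanks to weak elimination of imaginaries. The steps are:
\begin{enumerate}
\item Forking of $\bar a_0$ with $\bar a_2$ over $A$ gives a shortest geodesic $[p,q]$ in $G_{\mathrm{tree}}$ from $\acl(A\bar a_0)$ to $\acl(A\bar a_2)$ avoiding $Conv_{\mathrm{tree}}(A)$.
\item The independence $\indep{\bar a_0}{A\bar a_1}{\bar a_2}$ gives a last point $r\in\acl(A\bar a_1)$ on $[p,q]$.
\item The condition $\acl(A\bar a_0)\cap\acl(A\bar a_1)=\acl(A)$ gives $r\notin\acl(A\bar a_0)$.
\item The condition $\acl(A\bar a_0\bar a_1)\cap\acl(A\bar a_0\bar a_2)=\acl(A\bar a_0)$, together with Corollary~\ref{IndAlgClosed}, yields a point of $\acl(A\bar a_0)$ strictly between $r$ and $q$, contradicting minimality.
\end{enumerate}
That last condition, taken over $A\bar a_0$, is exactly the ingredient your approach has no counterpart for.
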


On the way to proving the above result we show that the theory of the Farey graph weakly eliminates imaginaries. In addition, we observe that forking independence is trivial, thus no infinite group is interpretable in it. 

\section{Preliminaries}

\subsection{Ampleness} 
Formally the ample hierarchy is defined as follows:

\begin{defi}\label{Ample}\cite{PillayGeomFork, AmpleDividing}
Let $T$ be a stable theory and $n\geq1$. Then $T$ is $n$-ample if (after possibly adding some parameters) there are 
$a_0,a_1,\ldots,a_n$ such that:
\begin{enumerate}
\item $a_0$ forks with $a_n$ over $\emptyset$;
\item $a_{i+1}$ does not fork with $a_0,\ldots,a_{i-1}$ over $a_i$, for $1\leq i<n$;
\item $\acl^{eq}(a_0)\cap \acl^{eq}(a_1)=\acl^{eq}(\emptyset)$;
\item $\acl^{eq}(a_0,\ldots,a_{i-1},a_i)\cap acl^{eq}(a_0,\ldots,a_{i-1},a_{i+1})=\acl^{eq}(a_0,\ldots,a_{i-1})$, for $1\leq i<n$.
\end{enumerate}

\end{defi}

To understand how ample a theory is one first needs to understand imaginaries. An imaginary can be thought of as canonical code assigned to a definable set. These codes exist in a mild expansion, $T^{eq}$, of the theory $T$. When these codes exist in the original theory we say that the theory admits elimination of imaginaries (see \cite{Poizat83c}).     

\begin{defi}
A (complete) first-order theory $T$ (strongly) eliminates imaginaries if for every $e$ in $\mathbb{M}^{eq}$ there exists a finite tuple $\bar a\in\mathbb{M}$, such that $e\in \dcl^{eq}(\bar a)$ and $\bar a\in \dcl^{eq}(e)$. Equivalently, every imaginary element is interdefinable with a real tuple.

We say that T weakly eliminates imaginaries if, for every $e\in \mathbb M^{eq}$, there is a finite tuple $\bar a\in \mathbb M$ such that $e\in \dcl^{eq}(\bar a)$ and 
$\bar a\in \acl^{eq}(e)$.

\end{defi}

The theories $ACF_p$ of algebraically closed fields are standard examples of theories with elimination of imaginaries \cite{Poizat83c}. The theory of infinite sets weakly eliminates imaginaries, but does not (strongly) eliminate imaginaries. 

We note in passing the following fact. 

\begin{fact}
A first-order theory $T$ weakly eliminates imaginaries if and only if every definable set has a smallest algebraically closed set of parameters it is defined over. 
\end{fact}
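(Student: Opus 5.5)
We prove the two directions separately. Throughout we work in a monster model $\mathbb M$ of $T$ and in $\mathbb M^{eq}$. For a definable set $X$ we write $\ulcorner X\urcorner$ for its canonical parameter in $\mathbb M^{eq}$. An automorphism fixes $X$ setwise if and only if it fixes $\ulcorner X\urcorner$. A set $A\subseteq\mathbb M$ is algebraically closed if $A=\acl(A)$. We say $X$ is defined over $A$ if it is $A$-definable.

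\textbf{Weak elimination implies smallest sets.} Assume $T$ weakly eliminates imaginaries. Let $X$ be definable and let $e=\ulcorner X\urcorner$. Choose a finite real tuple $\bar a$ with $e\in\dcl^{eq}(\bar a)$ and $\bar a\in\acl^{eq}(e)$. Set $B=\acl^{eq}(e)\cap\mathbb M$, which is algebraically closed.
\begin{enumerate}
\item \emph{$X$ is defined over $B$.} Since $\bar a\subseteq B$ and $e\in\dcl^{eq}(\bar a)$, every automorphism fixing $B$ pointwise fixes $e$, hence fixes $X$ setwise. A definable set fixed setwise by all automorphisms over $B$ is $B$-definable, by the standard compactness argument.
\item \emph{$B$ is smallest.} Let $A$ be algebraically closed with $X$ defined over $A$. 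Then $e\in\dcl^{eq}(A)$, so $B\subseteq\acl^{eq}(A)\cap\mathbb M=\acl(A)=A$.
\end{enumerate}

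\textbf{Smallest sets imply weak elimination.} Let $e\in\mathbb M^{eq}$. Write $e=\bar c/E$ for a $\emptyset$-definable equivalence relation $E$. Then $e$ is interdefinable with $\ulcorner X\urcorner$, where $X=E(\mathbb M,\bar c)$ is a definable set of real tuples. Let $B$ be the smallest algebraically closed set over which $X$ is defined.
\begin{enumerate}
\item \emph{$B$ is fixed setwise over $e$.} If $\sigma$ fixes $e$, then $\sigma(X)=X$ is defined over $\sigma(B)$. Since $\sigma(B)$ is also algebraically closed, minimality applied in both directions gives $\sigma(B)=B$.
\item \emph{A finite tuple defines $X$.} Some finite tuple $\bar a\subseteq B$ already defines $X$, so $e\in\dcl^{eq}(\bar a)$.
\item \emph{Each element of $B$ has finitely many conjugates over $e$.} Take $b\in B$. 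By (1), every conjugate of $b$ over $e$ lies in $B$. Since $X$ is defined over a finite $\bar a\subseteq B$, we have $B=\acl(\bar a)$ by minimality: $\acl(\bar a)$ is algebraically closed, contains $\bar a$, and is contained in $B$. Thus $b\in\acl(\bar a)$, say $b$ satisfies an algebraic formula $\psi(y,\bar a)$ with $k$ solutions. Over $e$, the tuple $\bar a$ ranges over tuples $\bar a'\subseteq B$ that also define $X$. Each such $\bar a'$ still has $\acl(\bar a')=B$. Consider the condition ``$\psi(y,\bar a')$ has $k$ solutions and $\bar a'$ defines $X$ via the same formula'' and take its union over all such $\bar a'$. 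All solutions lie in the set $B$, which is fixed setwise by (1). Finiteness of the conjugates of $b$ therefore reduces to showing that the type of $\bar a$ over $e$ has only finitely many realizations.
\item \emph{Finitely many realizations of $\tp(\bar a/e)$.} Suppose instead that $\tp(\bar a/e)$ has infinitely many realizations. Every realization lies in $B$ by (1). By compactness, infinitely many realizations of a type force unboundedly many realizations in the monster model. But all realizations lie in $B=\acl(\bar a)$, which is small. This contradiction shows $\bar a\in\acl^{eq}(e)$.
\end{enumerate}
Together with (2), this gives $e\in\dcl^{eq}(\bar a)$ and $\bar a\in\acl^{eq}(e)$, which is weak elimination of imaginaries.

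\textbf{Main obstacle.} The hard step is (4): showing that $\bar a\in\acl^{eq}(e)$, using that $B$ is invariant over $e$ and small. The argument relies on $B$ being $\acl$ of a finite set and therefore small.
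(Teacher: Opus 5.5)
Your proof is correct in both directions. The paper gives no proof of this statement; it records it as a standard fact and uses it together with Poizat's Lemma 16.17 to obtain weak elimination of imaginaries, so there is nothing to compare against. Your argument is the usual one. In one direction, $\acl^{eq}(e)\cap\mathbb M$ is the smallest algebraically closed set. In the other, the smallest set $B$ equals $\acl(\bar a)$ for a finite defining tuple $\bar a$, is setwise invariant under $Aut(\mathbb M/e)$, and is small, so $\tp(\bar a/e)$ has boundedly many realizations and is therefore algebraic.

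Step (3) is superfluous and somewhat muddled. Step (4) already gives $\bar a\in\acl^{eq}(e)$ directly, which is all that weak elimination requires. If you want the stronger conclusion $B\subseteq\acl^{eq}(e)$, apply the same orbit-inside-a-small-invariant-set argument to single elements $b\in B$. So you can drop the discussion of $\psi(y,\bar a')$ and the union over conjugate tuples.
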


We remark that the condition that any definable set has a smallest definably closed set of parameters it is defined over is strictly weaker than (strong) elimination of imaginaries. 

In practice the following lemma is very useful for proving weak elimination of imaginaries (see \cite[Lemma 16.17]{PoizatBookModelTheory}). 

\begin{lemma}
Let $T$ be a first-order theory. Suppose that the following conditions hold:
\begin{itemize}
    \item there is no strictly descending infinite chain of algebraically
    closed sets generated by finite sets,
    \[
    \acl(A_0)\supsetneq \acl(A_1)\supsetneq\ldots\supsetneq \acl(A_n) \supsetneq \ldots 
    \]
    \item if $A$ and $B$ are algebraically closed sets generated by finite
    sets and $X$ is definable over both $A$ and $B$, then $X$ is definable
    over $A\cap B$.
\end{itemize}
Then $T$ weakly eliminates imaginaries.
\end{lemma}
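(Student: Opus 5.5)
We prove this Poizat-style criterion by producing, for an imaginary $e$, a finite real tuple over which the corresponding definable set is defined and which is algebraic over $e$. Fix $e\in\mathbb M^{eq}$, say $e=\bar c/E$ for a $\emptyset$-definable equivalence relation $E$, and let $X=E(\mathbb M,\bar c)$. Then $e$ is the canonical parameter of $X$: an automorphism fixes $e$ iff it fixes $X$ setwise. Let $\mathcal F$ be the family of sets $\acl(A)$, with $A$ a finite real set, over which $X$ is definable. It is nonempty since $\bar c$ works.

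First we find a minimal element of $\mathcal F$. By the first hypothesis there is no infinite strictly descending chain in $\mathcal F$, so $\mathcal F$ has a minimal element $C=\acl(\bar a)$. Given another $D\in\mathcal F$, the second hypothesis makes $X$ definable over $C\cap D$. This set is algebraically closed, but we also need it to be generated by a finite set, so that it lies in $\mathcal F$ and minimality gives $C\cap D=C$.

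To get this, note that $X$ is defined by a formula with finitely many parameters $\bar d\subseteq C\cap D$. Then $\acl(\bar d)\subseteq C\cap D$ and $\acl(\bar d)\in\mathcal F$, so by minimality $\acl(\bar d)=C$. Hence $C\subseteq D$ for every $D\in\mathcal F$, i.e.\ $C$ is the smallest element of $\mathcal F$.

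Next we show $C$ is fixed setwise by $\mathrm{Aut}(\mathbb M/e)$. For such $\sigma$, $X$ is definable over $\sigma(\bar a)$ because $\sigma(X)=X$. So $\sigma(C)=\acl(\sigma\bar a)\in\mathcal F$, whence $C\subseteq\sigma(C)$. Applying the same argument to $\sigma^{-1}$ gives equality.

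Now choose a finite tuple $\bar a\subseteq C$ over which $X$ is defined. Then $e\in\dcl^{eq}(\bar a)$, since $e$ is the code of $X$ and every automorphism fixing $\bar a$ fixes $X$. It remains to show $\bar a\in\acl^{eq}(e)$, and this is the main obstacle. We know each coordinate of $\bar a$ lies in $C$ and that $C$ is $\mathrm{Aut}(\mathbb M/e)$-invariant, but $C$ may be infinite, so invariance of $C$ alone does not make the orbit of $\bar a$ finite.

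To overcome this, take $\bar a$ to be the tuple $\bar d$ constructed above, whose algebraic closure is $C$ and over which $X$ is defined by $\varphi(\bar x,\bar d)$. Consider the type-definable (over $e$) set of tuples $\bar d'$ such that $\varphi(\bar x,\bar d')$ defines $X$ and $\bar d'\in C$. The condition $\bar d'\in C$ says each coordinate is algebraic over $\bar d$. Each coordinate of every conjugate $\sigma\bar d$ lies in $\sigma(C)=C=\acl(\bar d)$, via some algebraic formula over $\bar d$. By compactness, and a saturation argument on $\mathrm{tp}(\bar d/e)$, a single algebraic formula $\psi(\bar y,\bar d)$ with a fixed bound $k$ captures all conjugates of $\bar d$ over $e$. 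Indeed, if no finite bound existed, there would be infinitely many realizations of $\mathrm{tp}(\bar d/e)$ all inside one $\acl(\bar d)$. A standard argument then yields, in a saturated model, a conjugate of $\bar d$ outside $\acl(\bar d)$, contradicting $\sigma(C)=C$. Hence $\mathrm{tp}(\bar d/e)$ has finitely many realizations, so $\bar d\in\acl^{eq}(e)$, completing weak elimination of imaginaries. The compactness step bounding the orbit is where we expect to need the most care.
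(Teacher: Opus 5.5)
Your proof is correct. For producing a smallest finitely generated algebraically closed set $C$ over which $X$ is definable, you follow the paper's two-line argument: the descending chain condition gives a minimal one, and the intersection hypothesis makes it smallest. You are also more careful than the paper at one point. You observe that $C\cap D$ need not be finitely generated, and you get around this by passing to $\acl(\bar d)$ for a finite $\bar d\subseteq C\cap D$ defining $X$.

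The real difference is at the end. The paper stops once a smallest set exists and relies on the quoted Fact (weak elimination of imaginaries holds iff every definable set has a smallest algebraically closed set of definition). You instead prove the needed direction yourself:
\begin{enumerate}
\item $C$ is $\mathrm{Aut}(\mathbb M/e)$-invariant.
\item Hence every $e$-conjugate of $\bar d$ lies in the small set $C=\acl(\bar d)$.
\item A non-algebraic $\tp(\bar d/e)$ would, by compactness, have a realization outside $C$, and by homogeneity that realization is an $e$-conjugate of $\bar d$, a contradiction.
\end{enumerate}
This makes the lemma self-contained.

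That last step needs nothing beyond this cardinality argument, so you can delete two things from your final paragraph. The detour through ``a single algebraic formula with a fixed bound'' is unnecessary. The claim that ``$\bar d'\in C$'' is type-definable over $e$ is inaccurate: membership in $\acl(\bar d)$ is a small disjunction of formulas over $\bar d$, not a type over $e$.

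Similarly, in the invariance step $X$ is definable over $\acl(\sigma\bar a)=\sigma(C)$, not necessarily over $\sigma(\bar a)$ itself. This is harmless, since $\sigma(C)\in\mathcal F$ is all you use.
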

\begin{proof}
As a consequence of the first condition, for any definable set $X$ there exists a minimal algebraically closed set over which $X$ is definable. The second condition implies that there exists a smallest one.  
\end{proof}

\subsection{Some model theory of the Farey Graph} 
The Farey graph, denoted by $FG$, is the graph whose vertex set is the set of extended rational numbers $\widehat{\mathbb Q}:=\mathbb Q\cup\{\infty\}$.
We represent each vertex by a reduced fraction $p/q$, with $p,q\in\mathbb Z$, $\gcd(p,q)=1$, and adopt the convention that $\infty=1/0$. Two distinct vertices $p/q$ and $r/s$ are joined by an edge precisely when $|ps-qr|=1$. We regard $FG$ as a structure in the language consisting of a single symmetric and irreflexive binary relation, interpreted as adjacency.

The following point of view is closer to our purposes. Through the identification
$Out(\mathbb F_2)\cong GL_2(\mathbb Z)$, the Farey graph carries a natural action
of $Out(\mathbb F_2)$. Its vertices may be identified with conjugacy classes of
rank-one free factors of $\mathbb F_2$, and two vertices are adjacent precisely
when the corresponding free factors admit generators forming a basis of
$\mathbb F_2$. This viewpoint is closely related to the rank-two case of
Culler--Vogtmann Outer space and, more generally, to the free factor complex
\cite{HatcherVogtmann1998}, which will be the subject of a subsequent paper.

Let $Th(FG)$ be the first-order theory of the Farey graph. The models of this theory can be described simply by gluing Farey graphs together. Two Farey graphs are glued along a single vertex and globally the combinatorics of the gluing are described by a forest. More formally.

\begin{fact}\cite{Farey_Graph}
Let $\mathcal{G}=(V(\mathcal{G}),E(\mathcal{G}))$ be a nonempty forest. For each
$v\in V(\mathcal{G})$, let
$
FG_v=(X_v,R_v)
$
be a copy of the Farey graph, and assume that the sets $\{X_v:v\in V(\mathcal{G})\}$ are pairwise disjoint. 

For every edge $e=\{u,v\}\in E(\mathcal{G})$, choose vertices
$x_{e,u}\in X_u
\ \text{and} \ 
x_{e,v}\in X_v.$
Let $X:=\bigsqcup_{v\in V(\mathcal{G})}X_v$,
and let $\sim$ be the smallest equivalence relation on $X$ satisfying $x_{e,u}\sim x_{e,v}$
for every edge $e=\{u,v\}\in E(\mathcal{G})$. 

Define a graph
$
FG(\mathcal{G})=(X/{\sim},R)
$
by declaring that, for $[x],[y]\in X/{\sim}$,
$
R([x],[y])
$
holds if and only if there exist $v\in V(\mathcal{G})$ and
$x',y'\in X_v$ such that
$
x'\sim x, 
y'\sim y, \text{and} \ 
R_v(x',y').
$
Then $FG(\mathcal{G})\models Th(FG)$.

Conversely, if $M\models Th(FG)$, then there exists a forest $\mathcal{G}$ such that

$$
M\cong FG(\mathcal{G}).
$$

\end{fact}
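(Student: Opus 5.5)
The plan is to prove both directions by finding an explicit first-order axiomatization $T'$ of the pieces-glued-along-a-forest picture. I would then show that $T'$ is complete and that its models are exactly the graphs $FG(\mathcal G)$. The axioms of $T'$ are:
\begin{enumerate}
\item every edge lies in exactly two triangles;
\item for every vertex $v$, the link of $v$ (the induced graph on its neighbours) is $2$-regular, has no finite cycles, and has no ``short cuts'';
\item for each $n$, every closed walk of length $n$ bounds a triangulated disc all of whose vertices lie within distance $n$ of the walk.
\end{enumerate}
The last condition is a first-order scheme, one sentence per $n$. First I check that the Farey graph satisfies $T'$. For (1) and (2) I use the $GL_2(\mathbb Z)$-action: the link of $\infty$ is the bi-infinite path $\mathbb Z$, and the action is transitive on vertices. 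For (3) I use the fact that $FG$ is the $1$-skeleton of an ideal triangulation of the hyperbolic plane with dual tree $T_3$, so every cycle is filled by finitely many ideal triangles. Next I check that each $FG(\mathcal G)$ satisfies $T'$. The link of a vertex becomes a disjoint union of bi-infinite paths, one per copy $FG_v$ containing it. A closed walk in $FG(\mathcal G)$ can only leave a copy and return through the same cut vertex, because $\mathcal G$ is a forest. So the walk decomposes into closed walks inside single copies, and each of those is filled by (3) for $FG$.

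For the converse inclusion, let $M\models T'$. I declare two triangles equivalent if they are connected by a chain of triangles, each sharing an edge with the next. The union of an equivalence class is a subgraph $P$, which I call a \emph{piece}. I claim that each piece is isomorphic to $FG$. By (1) and (2), the triangles of $P$ form a connected triangulated surface without boundary in which every vertex has infinite degree. The dual graph of this surface is $3$-regular. Axiom (3) forces the dual graph to be acyclic: a cycle in it yields a closed walk of vertices that is not filled, because a minimal filling would have to use a triangle already on one side. Hence the dual graph is $T_3$. Now I build an isomorphism to $FG$ by a standard induction. Fix a triangle in $P$ and a triangle in $FG$, and extend the bijection across adjacent triangles along the dual trees. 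By axiom (2), this gluing of triangles along edges recovers all edges.

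Two distinct pieces share at most one vertex, since a shared edge would put them into the same piece. The bipartite incidence graph between pieces and vertices lying in at least two pieces is acyclic, since a cycle would yield a closed walk in $M$ with no filling disc, violating (3). Every edge of $M$ lies in a triangle, so every vertex lies in a piece. Replacing each cut vertex by a star on the pieces containing it gives a forest $\mathcal G$ with $M\cong FG(\mathcal G)$, which proves the converse direction once $T'=Th(FG)$.

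The main obstacle is completeness of $T'$, which is what gives $FG(\mathcal G)\equiv FG$. I would first try a back-and-forth argument between $\omega$-saturated models, or an Ehrenfeucht--Fra\"iss\'e game of length $k$, using Gaifman locality. Any two models of $T'$ have isomorphic balls: a ball of radius $r$ around a tuple is determined by the ``convex hull'' of the tuple, namely the finitely many Farey triangles and cut vertices on the geodesics between its points, together with a few pieces hanging off. In saturated models each vertex lies in infinitely many pieces, which makes all local configurations of a given hull type realizable. The delicate point, and the step I expect to require the most work, is showing that the hull type of a tuple is determined by a finite amount of first-order data and can always be extended by one point on both sides.
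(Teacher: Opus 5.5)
\paragraph{The gap.} The gap is in the forward direction, $FG(\mathcal G)\models Th(FG)$. In your scheme this direction is equivalent to the completeness of $T'$, and you announce that step without carrying it out. The paper gives you nothing to compare with here, since it imports the whole statement from \cite{Farey_Graph}.

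The plan as you state it also fails: it is not true that any two models of $T'$ have isomorphic balls. In $FG$ the $1$-ball of a vertex is a cone over one bi-infinite path. In $FG(\mathcal G)$, a vertex lying in two copies has as $1$-ball a cone over two disjoint bi-infinite paths, and the cone point is recognisable as the unique vertex adjacent to all others. So Gaifman locality only reduces the question to elementary equivalence of such balls, which is the original problem again.

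Nor can you build a back-and-forth system directly between $FG$ and a countable $FG(\mathcal G)$ with $\mathcal G$ having an edge. Such a system would give an isomorphism, but $FG$ has no cut vertex. Concretely, two points in different copies glued at $w$ have no counterpart in $FG$ with an isomorphic hull. Their type is only the limit of pairs in a single copy whose triangulated strip has a longer and longer fan at an intermediate vertex, for example $\infty$ and $1/q$ with the fan at $0$, as $q\to\infty$.

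\paragraph{What would close it.} Your strategy can be completed by a back-and-forth between $\omega$-saturated models of $T'$. None of the following is in your proposal, and it is the substance of the forward direction.
First show that such models are rich. Take $z$ adjacent to $x$ and realise ``$y$ is adjacent to $x$ and not joined to $z$ by a path of length $\le n$ in the link of $x$'' for all $n$. This puts $y$ in a piece at $x$ different from that of $xz$, so every vertex lies in infinitely many pieces. Similarly there are infinitely many components, and by your converse argument every piece is a standard copy of $FG$.

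Then take as partial isomorphisms the maps $\bar x\mapsto\bar y$ that extend to an isomorphism of hulls. Here the hull is $Conv_{\mathrm{tree}}(\bar x)$, decorated in each of its pieces by the finite edge-connected Farey subcomplex spanned by the marked points (the entries of $\bar x$ and the cut vertices of the hull). The extension step uses two things:
\begin{itemize}
\item $Aut(FG)=PGL_2(\mathbb Z)$ acts simply transitively on ordered triangles, so an isomorphism between finite edge-connected Farey subcomplexes extends to the ambient copies (propagate across edges, each of which lies in exactly two triangles);
\item a case distinction on the gate of the new point in the hull: a new component, a new piece through a hull vertex, or a point inside or beyond a hull piece. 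Richness supplies fresh pieces and components on the other side.
\end{itemize}

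\paragraph{The converse direction.} Your converse direction needs only $FG\models T'$, not completeness. Three repairs are still needed there.
\begin{itemize}
\item Bound the number of triangles in axiom (3). As written it is not first order, because balls are infinite.
\item Add an axiom saying every vertex has a neighbour.
\item Replace the ``minimal filling'' sentence by an actual argument. The fundamental group of the flag complex of $M$ is the free product of those of the piece surfaces (each homotopy equivalent to its dual graph) and of the incidence multigraph. Simple connectivity then gives at once three facts: dual graphs are trees; a piece meets each vertex link in only one component, which you use tacitly; and the incidence graph is a forest.
\end{itemize}
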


It follows from the description of $1$-types that a model $M$ of $Th(FG)$ is $\kappa$-saturated if and only if $M$ is isomorphic to $FG(\mathcal{G})$ with $\mathcal{G}$ having at least $\kappa$ connected components and each $a\in M$ belongs to at least $\kappa$ distinct Farey graphs.

The forest $\mathcal G$ appearing in the preceding construction is not
canonically determined by the resulting model: non-isomorphic forests
may give rise to isomorphic models. For instance, a configuration of four
Farey graphs meeting at a single vertex can be represented either by a
path on four vertices or by a three-rayed star, provided that all edge
identifications are made at the same distinguished vertex. To remove this ambiguity, \cite{Farey_Graph} associates to every model a
canonical bipartite incidence forest, whose two classes of vertices are
the vertices of the model and its Farey subgraphs. 

\begin{defi}
Let $M\models Th(FG)$, and let $\mathcal F(M)$ denote the set of Farey
graphs in $M$. We define $G_{\mathrm{tree}}(M)$ to be the bipartite graph
with bipartition $M\sqcup\mathcal F(M)$, where $a\in M$ is adjacent to $F\in\mathcal F(M)$ precisely when $a\in F$.
\end{defi}

\begin{fact}\label{fact:Gtree}
Let $M\models Th(FG)$. Then $G_{\mathrm{tree}}(M)$ is a forest canonically
associated with $M$.
\end{fact}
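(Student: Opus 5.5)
We need to show that $G_{\mathrm{tree}}(M)$ is acyclic, and that it is canonical, i.e.\ determined by $M$ up to isomorphism and preserved by automorphisms of $M$.

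\textbf{Canonicity.} This needs no choices. The set $\mathcal F(M)$ consists of the subsets $F\subseteq M$ whose induced subgraph is a Farey graph, and we take these to be the maximal such subsets. This is an intrinsic notion in the graph $M$. Hence every isomorphism $M\to M'$ induces a bijection $\mathcal F(M)\to\mathcal F(M')$, and this bijection respects incidence. So the construction is functorial and $\mathrm{Aut}(M)$ acts on $G_{\mathrm{tree}}(M)$.

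\textbf{Identifying the Farey subgraphs.} Write $M\cong FG(\mathcal G)$ using the preceding Fact. The key claim is that the Farey subgraphs of $M$ are exactly the images $\pi(X_v)$ of the pieces $X_v$, where $\pi\colon X\to X/{\sim}$. I would prove this in three steps.

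\begin{enumerate}
\item Each $\pi(X_v)$ induces a copy of $FG_v$. Two points of $X_v$ are never $\sim$-equivalent, since equivalence classes meet each $X_v$ in at most one point; this holds because $\mathcal G$ is a forest and gluing is at single vertices. An edge between two points of $\pi(X_v)$ coming from another piece $X_u$ would require two distinct vertices shared by $\pi(X_u)$ and $\pi(X_v)$, and that would produce a cycle in $\mathcal G$.
\item Every edge of $M$ lies in a unique $\pi(X_v)$, and every triangle of $M$ also lies in a single $\pi(X_v)$. Otherwise the triangle would give a cycle through distinct pieces, again contradicting that $\mathcal G$ is a forest.
\item Any Farey subgraph $F$ of $M$ lies in one piece. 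The Farey graph is triangle-connected: any two edges are joined by a chain of triangles sharing edges. By step 2, all of $F$ is then contained in a single $\pi(X_v)$. Maximality then forces $F=\pi(X_v)$.
\end{enumerate}

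\textbf{Acyclicity.} Suppose $G_{\mathrm{tree}}(M)$ has a cycle $a_1F_1a_2F_2\cdots a_kF_ka_1$ with $k\ge2$, all $F_i$ distinct and all $a_i$ distinct. Each $F_i=\pi(X_{v_i})$ for distinct vertices $v_i$ of $\mathcal G$. The vertex $a_{i+1}$ lies in both $\pi(X_{v_i})$ and $\pi(X_{v_{i+1}})$, so its $\sim$-class contains a point of $X_{v_i}$ and a point of $X_{v_{i+1}}$. The equivalence relation $\sim$ is generated along edges of $\mathcal G$, so there is a path in $\mathcal G$ from $v_i$ to $v_{i+1}$. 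That path runs through pieces all containing a point of the class of $a_{i+1}$.

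Concatenating these paths gives a closed walk in $\mathcal G$. Since $\mathcal G$ is a forest, this walk must backtrack. We then derive that the consecutive paths overlap in a way forcing some $a_i=a_j$ or $F_i=F_j$, a contradiction.

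To make this precise, I would use that for each class $[x]$, the set of $v$ with $\pi(X_v)\ni[x]$ spans a subtree $T_{[x]}$ of $\mathcal G$. Distinct classes have subtrees meeting in at most one vertex, since two pieces share at most one point. Then $G_{\mathrm{tree}}(M)$ is obtained from the forest $\mathcal G$ by contracting each subtree $T_{[x]}$ to a star centred at a new vertex $[x]$. A standard argument about subtree hypergraphs of a tree with pairwise intersections of size at most one shows that the resulting incidence graph is acyclic.

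\textbf{Main obstacle.} I expect the hardest step to be the identification of Farey subgraphs, in particular step 3. This needs a usable intrinsic characterization, either via triangle-connectedness or via maximality, that is robust in the glued graph.
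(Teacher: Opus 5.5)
The paper gives no proof of this statement. It is quoted from \cite{Farey_Graph}, and the surrounding text only explains why it is needed: the forest $\mathcal G$ in a presentation $M\cong FG(\mathcal G)$ is not unique. Your proposal is therefore a self-contained derivation from the structure theorem in the preceding Fact, and it is correct. What it adds is an actual explanation of canonicity. $\mathcal F(M)$ is defined intrinsically from the graph, and it coincides with the set of pieces $\pi(X_v)$ for \emph{every} presentation. Your subtree picture also accounts for the paper's example: the path and the three-rayed star are just two different trees $T_{[x]}$ on the same four pieces, and both get replaced by the same star at $[x]$.

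When you write it out, isolate the one fact that drives everything. Each edge of $\mathcal G$ carries exactly one glued class, so the subtrees $T_{[x]}$ with at least two vertices partition the edge set of $\mathcal G$.
\begin{itemize}
\item This gives step 1 properly. If pieces $u\neq v$ shared two classes, their reduced gluing chains would be two $u$--$v$ paths in $\mathcal G$ with different first edges, hence a cycle.
\item It is also the ``standard argument'' you invoke for acyclicity, and it should be written out. The closed walk you build is a concatenation of paths lying in pairwise edge-disjoint subtrees $T_{[a_{i+1}]}$, so it repeats no edge. But a nontrivial closed walk in a forest crosses each edge it uses an even number of times.
\item Your step 2 claim about triangles is a consequence of this acyclicity. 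A triangle spread over several pieces yields a cycle of length $4$ or $6$ in the point--piece incidence graph. So acyclicity of that incidence graph should be proved before you identify $\mathcal F(M)$.
\item The step you expected to be hardest needs no maximality. Every edge of a Farey graph lies in exactly two triangles. So a Farey subgraph $F$ contained in a piece $P$ contains both triangles of $P$ through each of its edges. Triangle-connectedness of $P$ then forces $F=P$, and this holds even for non-induced copies.
\end{itemize}
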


From now on, all graph-theoretic notions, including paths, geodesics,
convexity, gates, and connected components, are understood with respect
to the canonical incidence forest $G_{\mathrm{tree}}(M)$.

We now recall the description of algebraic closure from \cite{Farey_Graph}. 

\begin{fact}[Algebraic closure]\label{FGAlc}
Let $M\models Th(FG)$ and suppose first that
$G_{\mathrm{tree}}(M)$ is connected. For $A\subseteq M$, let
$Conv_{\mathrm{tree}}(A)$ denote the convex hull of $A$ in $G_{\mathrm{tree}}(M)$, and set
\[
Conv_{M}(A)
=
\bigcup_{F\in Conv_{\mathrm{tree}}(A)\cap\mathcal F(M)}F.
\]
Then
\[
\acl_M(A)
=
\begin{cases}
\emptyset, & \text{if } A=\emptyset,\\[1mm]
A, & \text{if } |A|=1,\\[1mm]
Conv_{M}(A), & \text{if } |A|\geq 2.
\end{cases}
\]

In general, write
\[
M=\bigsqcup_{i\in I}M_i,
\]
where the $M_i$ are the connected components of $M$, and put
$A_i=A\cap M_i$. Then
\[
\acl_M(A)
=
\bigsqcup_{i\in I}\acl_{M_i}(A_i).
\]
\end{fact}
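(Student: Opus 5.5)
The plan is to work in a sufficiently saturated model $M$ (by the description of saturated models above, every vertex lies in infinitely many Farey subgraphs and there are infinitely many components). Algebraic closure has finite character, so it suffices to compute $\acl(A)$ for finite $A$. The general formula then follows by taking unions over finite subsets, since $Conv_M$ is also a directed union over finite subsets. There are two inclusions, and I would prove them by different methods. For $Conv_M(A)\subseteq \acl(A)$ I would use definable combinatorics of finite paths. For $\acl(A)\subseteq Conv_M(A)$, together with the degenerate cases, I would construct automorphisms of $M$ by gluing automorphisms of Farey pieces along $G_{\mathrm{tree}}(M)$. The splitting over connected components comes essentially for free. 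No edge and no finite path joins distinct components, so I can permute components independently, using by saturation that each component type occurs infinitely often. This reduces the statement to connected $G_{\mathrm{tree}}(M)$.

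For the inclusion $Conv_M(A)\subseteq\acl(A)$ with $|A|\ge 2$, the first ingredient is a fact about the standard Farey graph $FG$: if $a\neq b$ are vertices of a single Farey subgraph $F$, then $F\subseteq\acl(a,b)$. I would prove this in two steps.
\begin{enumerate}
\item An adjacent pair has exactly two common neighbours. Iterating ``common neighbours of an edge'' starting from one edge exhausts $FG$, because every vertex is reached by finitely many Farey mediant steps. Hence every vertex of $F$ is algebraic over an edge of $F$.
\item For nonadjacent $a,b$ in $F$, there are only finitely many geodesics from $a$ to $b$; these run through the finite ``ladder'' of triangles separating $a$ from $b$. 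So the vertices of these geodesics are in $\acl(a,b)$, and in particular some edge of $F$ is algebraic over $a,b$.
\end{enumerate}
Next take $a,b\in A$ in different Farey graphs. The tree path from $a$ to $b$ in $G_{\mathrm{tree}}(M)$ alternates between Farey graphs $F_1,\dots,F_k$ and cut vertices $c_1,\dots,c_{k-1}$ of $M$. Every path in $M$ from $a$ to $b$ passes through each $c_j$, and there are only finitely many geodesics between $a$ and $b$. Hence each $c_j\in\acl(a,b)$. Each $F_j$ now contains two distinct points of $\{a,c_1,\dots,c_{k-1},b\}$, so $F_j\subseteq\acl(a,b)$ by the Farey graph fact. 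Since $Conv_{\mathrm{tree}}(A)$ is the union of tree paths between pairs of points of $A$, this gives $Conv_M(A)\subseteq\acl(A)$.

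For the reverse inclusion, let $c\notin Conv_M(A)$. The set $Conv_M(A)$ is a union of whole Farey graphs, and it is convex in the tree. So the gate of $c$ onto $Conv_{\mathrm{tree}}(A)$ is a vertex $g\in M$, and $c$ lies in a branch $B$ of $G_{\mathrm{tree}}(M)$ hanging off $g$ and disjoint from the hull. By saturation, $g$ has infinitely many pairwise isomorphic branches disjoint from the hull. Swapping $B$ with any of them, and fixing everything else pointwise, is a graph automorphism of $M$. This holds because edges of $M$ live inside single Farey graphs, and the gluing happens only at $g$. These swaps give $c$ infinitely many conjugates over $A$, so $c\notin\acl(A)$.

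The cases $|A|\le 1$ go the same way. For $A=\{a\}$, take the branch at $a$ containing $c$. If $c$ lies in a Farey graph through $a$, I would also use that the vertex stabiliser in $PGL_2(\mathbb Z)$ has infinite orbits on $FG\setminus\{a\}$. For $A=\emptyset$, vertex-transitivity of $FG$ plus the infinitely many components gives $\acl(\emptyset)=\emptyset$.

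I expect the main obstacle to be the Farey graph input in the first inclusion: that a Farey subgraph is algebraic over any two of its vertices, and in particular the finiteness of geodesics between nonadjacent vertices. This is where the specific geometry of $FG$ (its ladders of triangles) enters. A secondary technical point is checking that the glued maps really are automorphisms of the canonical structure. That verification should be routine from Fact~\ref{fact:Gtree}, since $G_{\mathrm{tree}}(M)$ is a forest and branch swaps preserve incidence.
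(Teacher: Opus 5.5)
The paper does not prove this statement. It is quoted as a Fact from \cite{Farey_Graph}, so there is no internal argument to compare with, and your proposal has to stand on its own. In outline it does. The inclusion $Conv_M(A)\subseteq\acl(A)$ rests on definable finite sets: the two common neighbours of an edge, and the set $\{x: d(a,x)+d(x,b)=d(a,b)\}$, which is finite by the ladder argument. That set contains every cut vertex $c_j$ of the tree path, and in fact $c_j\in\dcl(a,b)$. The reverse inclusion, by swapping isomorphic branches at a gate, is also a correct strategy. Note that the first half uses only definable finite sets, so it works verbatim in any model.

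The step you are missing is the return from a saturated model to an arbitrary one. The first clause concerns a model $M$ with $G_{\mathrm{tree}}(M)$ connected, and such an $M$ is never $\aleph_0$-saturated. Indeed, the type $\{d(x,a)>n:n<\omega\}$ is finitely satisfiable in $M$ but can only be realised outside the component of $a$. Worse, a vertex of $M$ may lie in only finitely many Farey graphs (e.g.\ $M=FG$), so branch swapping inside $M$ is unavailable.

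So take a saturated $\mathbb M\succ M$. Then $\acl_M(A)=\acl_{\mathbb M}(A)$ is automatic, but you must also check that $Conv_M(A)=Conv_{\mathbb M}(A)$. This holds because every Farey subgraph of $M$ is a Farey subgraph of $\mathbb M$:
\begin{itemize}
\item the statement that every edge lies in exactly two triangles is first-order;
\item a Farey graph is the closure of any one of its edges under adjoining common neighbours, so this closure computed in $\mathbb M$ never leaves $M$.
\end{itemize}
Distinct components of $M$ also stay at infinite distance in $\mathbb M$. Hence $G_{\mathrm{tree}}(M)$ is an induced subforest of $G_{\mathrm{tree}}(\mathbb M)$ with convex components, so tree geodesics and hulls agree. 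The same transfer yields the component decomposition for arbitrary $M$.

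There are also two smaller points and one simplification.
\begin{itemize}
\item Take $\mathbb M$ saturated of its own cardinality $\kappa$. Then every vertex lies in exactly $\kappa$ Farey graphs, and all components, as well as all branches at a given vertex, are isomorphic by a back-and-forth along the tree. Mere $\kappa$-saturation allows vertices of different multiplicities and does not give this.
\item The gate must be taken onto the subtree spanned by $Conv_M(A)$, not onto $Conv_{\mathrm{tree}}(A)$. For $A=\{a,b\}\subseteq F$ and $c$ hanging off $x\in F\setminus\{a,b\}$, the gate onto $Conv_{\mathrm{tree}}(A)$ is the Farey node $F$, whereas the branching point you need is $x$.
\item For $|A|=1$, the appeal to the vertex stabiliser in $PGL_2(\mathbb Z)$ is unnecessary in $\mathbb M$. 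The branch at $a$ containing $c$ already contains the whole Farey graph through $a$ in which $c$ lies, so swapping branches at $a$ moves $c$.
\end{itemize}
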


%Let $M\models Th(FG)$ and $A\subset M$. Suppose $A=\sqcup_{i\in I} A_i$, where each $A_i$ is a subset of a distinct connected component of $M$. Then $\acl_M(A)$    
%
%\end{fact}

Finally we recall the description of forking independence from \cite{Farey_Graph}. We adopt the convention that the convex hull of the empty set is empty. 

\begin{fact}[Forking independence]\label{FGFind}
Let $M\models Th(FG)$ be a sufficiently saturated model, and let
$A,B,C\subseteq M$. Then $B$ is independent from $C$ over $A$ if and
only if every geodesic in $G_{\mathrm{tree}}(M)$ joining a point of
$\acl(B)$ to a point of $\acl(C)$ meets $Conv_{\textrm{tree}}(A)$.

In particular, $B$ is independent from $C$ over the empty set if and
only if no connected component of $M$ contains both an element of $B$
and an element of $C$.
\end{fact}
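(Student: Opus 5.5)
The plan is to use the characterisation of forking in a stable theory by abstract properties. Define a ternary relation $B \mathop{\mathpalette\Ind{}}^{*}_A C$ to hold exactly when every geodesic in $G_{\mathrm{tree}}(M)$ from a point of $\acl(B)$ to a point of $\acl(C)$ meets $Conv_{\mathrm{tree}}(A)$. The theory is $\omega$-stable, so it suffices to show that $\mathop{\mathpalette\Ind{}}^{*}$ satisfies invariance, monotonicity, symmetry, transitivity, finite character, local character, extension, and stationarity over models. A relation with these properties coincides with nonforking independence (Kim--Pillay / Harnik--Harrington). The clause about the empty base is then immediate. Since $Conv_{\mathrm{tree}}(\emptyset)=\emptyset$, the condition says that no geodesic joins $\acl(B)$ to $\acl(C)$. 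By Fact~\ref{FGAlc}, $\acl$ preserves the decomposition into connected components, so this means no component meets both $B$ and $C$.

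\textbf{The formal properties.} Invariance follows because $G_{\mathrm{tree}}(M)$ is canonically attached to $M$ (Fact~\ref{fact:Gtree}). Monotonicity and symmetry are clear from the definition. Finite character holds because a geodesic only involves finitely many points of $\acl(B)$ and $\acl(C)$, and by Fact~\ref{FGAlc} these lie in the algebraic closure of finitely many points of $B$ and $C$. Transitivity is a tree argument: a geodesic from $\acl(B)$ to $\acl(CD)$ either meets $Conv_{\mathrm{tree}}(AC)$ or not, and in a forest convex hulls compose well. For local character, take $A\subseteq C$ with $|A|\le|B|+\aleph_0$ containing, for each pair (point of $\acl(B)$, component), the finitely many points of $C$ needed so that $Conv_{\mathrm{tree}}(A)$ contains the gate (nearest-point projection) of that point onto $Conv_{\mathrm{tree}}(C)$.

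\textbf{Extension and stationarity.} These need the description of types, which I expect to be the main obstacle. The key lemma I would prove is that, for a small model (or algebraically closed set) $N$ and a tuple $b$, $\tp(b/N)$ is determined by the isomorphism type over $N$ of the finite piece of $G_{\mathrm{tree}}$ spanned by $\acl(bN)$, together with the gates of $\acl(b)$ onto $Conv_{\mathrm{tree}}(N)$. To prove it I would build a back-and-forth system of partial isomorphisms between such configurations. This uses homogeneity of the Farey graph ($GL_2(\mathbb Z)$ acts transitively on vertices and on edges, and the stabiliser of a vertex acts richly on its neighbours). It also uses the saturation criterion: every vertex lies in $\kappa$ many Farey subgraphs and there are $\kappa$ many components.

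Given the lemma, extension goes as follows. Starting from $\tp(b/A)$ and $C\supseteq A$, attach a fresh copy of the part of $\acl(bA)$ lying outside $Conv_{\mathrm{tree}}(A)$ to new Farey graphs (or new components) at the same gates. Saturation makes this possible, and the result realises a $\mathop{\mathpalette\Ind{}}^{*}$-free extension. Stationarity over a model $N$ goes as follows. Any two realisations $b',b''$ of $\tp(b/N)$ that are $\mathop{\mathpalette\Ind{}}^{*}$-free from $C$ have their branches hanging off $Conv_{\mathrm{tree}}(N)$ at the same gates. By freeness these branches avoid $\acl(NC)$, so one can glue an isomorphism fixing $\acl(NC)$ and moving $b'$ to $b''$. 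By the type lemma, $\tp(b'/NC)=\tp(b''/NC)$.
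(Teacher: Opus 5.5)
Your strategy of checking the Kim--Pillay/Harnik--Harrington axioms for a tree relation is reasonable, but the relation you define does not satisfy them. The problem is that you take $Conv_{\mathrm{tree}}(A)$ of the base itself rather than $Conv_{\mathrm{tree}}(\acl(A))$.

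\emph{Counterexample.} Take adjacent $a,b$ in a Farey graph $F$, and let $c,d$ be their two common neighbours. Then $c,d\in\acl(ab)=F$, but $c,d\notin Conv_{\mathrm{tree}}(\{a,b\})=\{a,F,b\}$. Let $F'\neq F$ be another Farey graph through $c$ and take $e\in F'\setminus\{c\}$. The geodesic $c,F',e$ misses $\{a,F,b\}$, so $c$ is not $*$-independent from $e$ over $ab$. Yet $\indep{c}{ab}{e}$ holds, because $\tp(c/ab)$ is algebraic.

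\emph{Which axioms fail.} Your relation breaks the axioms you intend to verify:
\begin{itemize}
\item Even $c$ fails to be $*$-independent from $\{a,b\}$ over $\{a,b\}$, because the length-zero geodesic at $c\in\acl(ab)$ misses the hull.
\item Extension fails for $\tp(c/ab)$. Its only realisations are $c$ and $d$, so adding to $C$ a point of a further Farey graph through each of them leaves no $*$-independent realisation.
\item Local character fails for $c$ over $C=\{a,b\}$. No $A_0\subseteq C$ has $c\in Conv_{\mathrm{tree}}(A_0)$.
\end{itemize}
In your sketch, the local-character step overlooks points of $\acl(C)$ lying outside $Conv_{\mathrm{tree}}(C)$; the gate onto $Conv_{\mathrm{tree}}(C)$ does not control geodesics to those points. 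The extension step tries to move into fresh branches points of $\acl(bA)$ that are algebraic over $A$, and these cannot move.

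The same example shows that the Fact, read literally with $Conv_{\mathrm{tree}}(A)$ for an arbitrary base, is false, so no argument can prove it verbatim. The paper gives no proof of its own; it quotes the result from earlier work.

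\emph{Repair.} Replace $Conv_{\mathrm{tree}}(A)$ by $Conv_{\mathrm{tree}}(\acl(A))$ throughout, or equivalently assume $A=\acl(A)$. The empty-base clause is unaffected, since $\acl(\emptyset)=\emptyset$. The observation that makes your axiom checks work is this: every Farey vertex of this hull has all its neighbours in the hull. Hence the gate of any outside point is an element vertex lying in $\acl(A)$, and every element vertex of the hull lies in $\acl(A)$. With this:
\begin{itemize}
\item Transitivity holds, because a geodesic from $x\in\acl(B)$ into $Conv_{\mathrm{tree}}(\acl C)$ first enters it at a point of $\acl(C)$.
\item Local character holds, because each such gate lies in $\acl$ of at most two points of $C$.
\item Extension and stationarity reduce to gluing automorphisms at gates in $\acl(A)$, as in Corollary~\ref{StationAlgClosed}.
\end{itemize}

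For tuples in the stationarity step, do not simply take $\sigma$ on the branches of $b'$ and $\sigma^{-1}$ on those of $b''$, since these two sets of branches can overlap partially. Instead, close up the finite partial permutation of branches at each gate induced by $\sigma$, using powers of $\sigma^{-1}$ along the chains, and take the identity on all other branches.
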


An easy consequence of the above fact is that any type in this theory is stationary, equivalently $\dcl^{eq}(\emptyset)=\acl^{eq}(\emptyset)$. 

\begin{corollary}\label{StationaryEmpty}
Every type $p(\bar x)\in S_n(Th(FG))$ is stationary.
\end{corollary}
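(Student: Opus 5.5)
We must show every complete type over $\emptyset$ is stationary: if $p\in S_n(\emptyset)$ and $q_1,q_2\in S(B)$ are nonforking extensions of $p$, then $q_1=q_2$. The plan is to work in a sufficiently saturated $M$ and realise the two extensions by $\bar a_1\models q_1$ and $\bar a_2\models q_2$. We then build an automorphism of $M$ that fixes $B$ pointwise and sends $\bar a_1$ to $\bar a_2$.

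\textbf{Step 1.} By Fact~\ref{FGFind}, $\bar a_j$ is independent from $B$ over $\emptyset$ exactly when no connected component of $M$ meets both $\bar a_j$ and $B$. So each $\bar a_j$ lives in a union of connected components of $M$ that is disjoint from the components meeting $B$.

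\textbf{Step 2.} Since $\bar a_1\equiv\bar a_2$, there is an automorphism $\sigma$ of $M$ with $\sigma(\bar a_1)=\bar a_2$.

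\textbf{Step 3.} We use the Fact describing models to decompose $M$ via the forest $\mathcal G$. Automorphisms of $M$ permute the connected components of $M$, and connected components are glued from Farey graphs indexed by the components of $\mathcal G$. Let $C_j$ be the union of the finitely many components meeting $\bar a_j$. Then $\sigma$ restricts to an isomorphism $C_1\to C_2$. We define $\tau$ as follows:
\begin{itemize}
\item[(i)] $\tau=\sigma$ on $C_1$;
\item[(ii)] $\tau=\sigma^{-1}$ on $C_2$, where this is meant only for components of $C_2$ not in $C_1$, handled by a cycle-chasing argument; the simpler route is given below;
\item[(iii)] $\tau$ is the identity on every other component, in particular on all components meeting $B$.
\end{itemize}
The simpler route replaces (ii) as follows. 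Saturation supplies an auxiliary tuple $\bar a_3\equiv \bar a_1$ whose components are disjoint from those of $\bar a_1$, $\bar a_2$ and $B$. It then suffices to show $\bar a_1\equiv_B\bar a_3$ and $\bar a_2\equiv_B\bar a_3$. Here $C_1$ and $C_3$ are disjoint, and we swap them: $\tau=\sigma'$ on $C_1$, $\tau=\sigma'^{-1}$ on $C_3$, and the identity elsewhere. Because $M$ is a disjoint union of its components and the language is a single binary relation with no edges between components, this $\tau$ is an automorphism. It fixes $B$, since $B$ lies in components outside $C_1\cup C_3$.

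\textbf{Step 4.} The swap gives $\bar a_1\equiv_B\bar a_3\equiv_B\bar a_2$, so $q_1=q_2$. This proves stationarity, and hence also $\dcl^{eq}(\emptyset)=\acl^{eq}(\emptyset)$.

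The main obstacle is the bookkeeping in Step 3. Distinct connected components must really be independent "direct summands", so that a piecewise-defined bijection is an automorphism; this is clear because adjacency only holds within a component. We also need saturation to find a copy $\bar a_3$ in fresh components. A realisation of $\tp(\bar a_1)$ over $\bar a_1\bar a_2B$ that is independent from it exists by extension of nonforking in the stable theory, and by Fact~\ref{FGFind} it lies in new components.
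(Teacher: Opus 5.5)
Your proof is correct and follows the paper's route. In both, Fact~\ref{FGFind} places the realizations of the two nonforking extensions in components disjoint from those meeting $B$, and one then builds an automorphism that acts only on components away from $B$. Your fresh copy $\bar a_3$ and the swap of $C_1$ with $C_3$ justify the paper's bare claim that the automorphism ``may be chosen to act non-trivially only on the connected components disjoint from $A$''; that claim needs an argument when $C_1$ and $C_2$ share components, so in a write-up you can drop the abandoned item (ii).
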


\begin{proof}
Let $p(\bar x)=\tp(\bar a/\emptyset)$, and let $A$ be an arbitrary set of parameters. Suppose that $\bar b$ realizes a non-forking extension of $p$ to $A$. Then $\tp(\bar b/\emptyset)=\tp(\bar a/\emptyset)$,
and, since $\indep{\bar b}{\emptyset}{A}$, every connected component containing an element of $\bar b$ is disjoint from the connected components meeting $A$.

Now let $\bar c$ be another realization of a non-forking extension of $p$ to $A$. Since both $\bar b$ and $\bar c$ realize $p$, there is an automorphism sending $\bar b$ to $\bar c$. Moreover, this automorphism may be chosen to act non-trivially only on the connected components disjoint from $A$. Consequently, it fixes $A$ pointwise, and therefore $\tp(\bar b/A)=\tp(\bar c/A)$.

Hence the non-forking extension of $p$ to $A$ is unique, so $p$ is stationary.
\end{proof}

\begin{remark}
Types over arbitrary parameter sets need not be stationary. Let $a,b$ be adjacent vertices in the Farey graph, and let $c,d$ denote the two common neighbours of $a$ and $b$. Since there is an automorphism fixing $a$ and $b$ and interchanging $c$ and $d$, we have $\tp(c/a,b)=\tp(d/a,b)$. However, $c,d\in\acl(a,b)$, so $\tp(c/\acl(a,b))\neq\tp(d/\acl(a,b))$.

As algebraic extensions are non-forking, these are two distinct non-forking extensions of $\tp(c/a,b)$, showing that $\tp(c/a,b)$ is not stationary.
\end{remark}

On the other hand, the preceding phenomenon cannot occur over algebraically closed parameter sets. The following result naturally extends the previous corollary from the empty set to arbitrary algebraically closed parameter sets.

\begin{corollary}\label{StationAlgClosed}
Let $A=\acl(A)\subseteq\mathbb M$. Then every type over $A$ is stationary.
\end{corollary}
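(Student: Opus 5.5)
We follow the same strategy as in Corollary~\ref{StationaryEmpty}. Fix $A=\acl(A)$ and a tuple $\bar a$. Let $\bar b,\bar c$ realize non-forking extensions of $\tp(\bar a/A)$ to some set $B\supseteq A$; we may enlarge $B$ to $\acl(B)$. The goal is an automorphism fixing $B$ pointwise and sending $\bar b$ to $\bar c$.

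\emph{Step 1: locate $\bar b$ relative to $B$ using Fact~\ref{FGFind}.} Independence of $\bar b$ from $B$ over $A$ says that every geodesic in $G_{\mathrm{tree}}$ from $\acl(\bar b)$ to $B$ passes through $Conv_{\mathrm{tree}}(A)$. Since $A=\acl(A)$, Fact~\ref{FGAlc} describes $Conv_{\mathrm{tree}}(A)$ componentwise:
\begin{itemize}
\item if $A$ has at least two points in a component, then $A$ contains every Farey graph of the tree-convex hull, so the vertex-points of $Conv_{\mathrm{tree}}(A)$ lie in $A$;
\item if $A$ has a single point $a$ in a component, then $Conv_{\mathrm{tree}}(A)$ there is just $\{a\}$;
\item if $A$ misses a component, then $Conv_{\mathrm{tree}}(A)$ is empty there.
\end{itemize}
Consequently each geodesic from a point of $\bar b$ to a point of $B$ passes through a \emph{vertex of $M$ lying in $A$}. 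It cannot pass only through a Farey-graph node $F$ of the hull: in the bipartite tree a geodesic through $F$ enters and leaves $F$ through vertices of $F$, and these lie in $A$. The only exception is an endpoint equal to $F$ itself, which does not occur for vertex endpoints. Hence the part of $\acl(\bar b)\setminus A$ lies in branches of the tree hanging off $A$ (or in components not meeting $A$), and those branches contain no point of $B\setminus A$. The same holds for $\bar c$.

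\emph{Step 2: build the automorphism.} Since $\tp(\bar b/A)=\tp(\bar c/A)$, there is an automorphism $\sigma$ fixing $A$ with $\sigma(\bar b)=\bar c$. We modify $\sigma$ so that it is the identity on all branches of $G_{\mathrm{tree}}$ at $A$ that meet $B\setminus A$ and on components meeting $B$ but not $A$, while agreeing with $\sigma$ on the branches carrying $\bar b$. Automorphisms of $M$ correspond to automorphisms of the forest $G_{\mathrm{tree}}(M)$ together with compatible isomorphisms of the Farey pieces, so such a patching is possible. One defines it branch by branch at the cut-vertices in $A$, and component by component, exactly as in the proof of Corollary~\ref{StationaryEmpty}.

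The main obstacle is the branches containing $\bar c$: these may differ from those containing $\bar b$, and they may coincide with branches containing $B$. Step 1 rules out this conflict, since the branches carrying $\bar b$ and $\bar c$ avoid $B\setminus A$. There is one delicate point. If a point of $\bar b$ lies inside a Farey graph $F$ of the hull but outside $A$, this is impossible, because $F\subseteq A$. If $\bar b$ meets the vertex set of a single-point component $\{a\}$, the branches at $a$ are Farey graphs containing $a$, and $\sigma$ may have to permute them. The fix is to compose $\sigma$ with a permutation of branches at $a$ that sends the $\bar c$-branches back to ones disjoint from $B$, which is possible by saturation. Alternatively, one can realize $\sigma$ first over $A\cup B$ restricted to the relevant branches. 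Once the pieces are glued, $\tp(\bar b/B)=\tp(\bar c/B)$, so $\tp(\bar a/A)$ is stationary.
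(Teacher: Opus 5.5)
Your strategy is the paper's: locate the gates of $A$ and patch $\sigma\in Aut(\mathbb M/A)$ branch by branch. Step~1 is fine; it even justifies the paper's unproved remark that the gate is of element type.

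The gap is in Step~2. That step is the real content of the corollary, and you assert it rather than construct it. You specify $f$ on the branches carrying $\bar b$ (equal to $\sigma$) and on those meeting $B$ (the identity). You do not specify $f$ on a branch $P$ that carries $\bar c$ but not $\bar b$, and there neither choice works. The identity makes $P$ the image of two branches, $P$ and $\sigma^{-1}(P)$. Using $\sigma$ can push $P$ onto a branch meeting $B$, which $f$ must fix.

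The paper handles one element by taking $f=\sigma^{-1}$ on $T_2$. For tuples even that fails. If $\sigma$ cycles $T_1\to T_2\to T_3$ at a gate with $b_1\in T_1$ and $b_2\in T_2$, then $\sigma^{-1}$ on $T_3$ lands on $T_2=\sigma(T_1)$. So you have to follow $\sigma$-orbits, for instance as follows.
\begin{itemize}
\item Let $\mathcal P_b$ be the finitely many components of $G_{\mathrm{tree}}(\mathbb M)\setminus Conv_{\mathrm{tree}}(A)$ containing points of $\bar b$, and put $\mathcal P_c=\sigma(\mathcal P_b)$.
\item Set $f=\sigma$ on each $P\in\mathcal P_b$.
\item On $P\in\mathcal P_c\setminus\mathcal P_b$, set $f=\sigma^{-k}$, where $k\geq 1$ is least with $\sigma^{-k}(P)\notin\mathcal P_c$. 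Such $k$ exists by finiteness together with $P\notin\mathcal P_b$, and then $\sigma^{-k}(P)\in\mathcal P_b\setminus\mathcal P_c$.
\item Set $f=\mathrm{id}$ on $Conv_{\mathrm{tree}}(A)$ and on all other pieces.
\end{itemize}
This $f$ permutes the pieces. Each piece is either a whole component missing $A$, or is attached to $Conv_{\mathrm{tree}}(A)$ at a single vertex of $A$, which every power of $\sigma$ fixes; hence $f$ is an automorphism of $\mathbb M$. It fixes $B$, because by Step~1, applied to $\bar b$ and to $\bar c$, no piece of $\mathcal P_b\cup\mathcal P_c$ meets $B$. Finally $f(\bar b)=\bar c$.

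Your ``delicate point'' paragraph misdiagnoses the difficulty.
\begin{itemize}
\item Components in which $A$ is a single point are not special: that point is simply the gate, and $\sigma$ permutes the branches at it exactly as at any other gate.
\item The $\bar c$-branches are already disjoint from $B$ by Step~1, so nothing needs to be ``sent back''.
\item Saturation plays no role; what is missing is bijectivity.
\end{itemize}
With the repair above your proof is correct. It also covers tuples directly, which is an advantage over the paper's written proof. The paper only treats $1$-types, asserting without argument that this suffices, yet Lemma~\ref{Intersect} applies the corollary to tuples.
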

\begin{proof}
It is enough to prove that any $1$-type, $p(x)$, over $A$ is stationary. Let $B\supset A$ and $tp(a_1/B)$, $tp(a_2/B)$ be non-forking extensions of $p(x)=tp(a/A)$. By Corollary \ref{StationaryEmpty}, we may assume that $A$ has non-empty
intersection with the connected component containing $a_1$. Since
$\tp(a_1/A)=\tp(a_2/A)$, it follows that $a_2$ also belongs to the same connected component. Without loss of generality we may assume that $a_1, a_2$ and $B$ belong to the same connected component of $\mathbb{M}$.

By the description of forking independence, every path from $a_i$ to
$B$ passes through $A$. Since $A$ is convex, there is a unique element
$g_i\in A$, called the \emph{gate} of $A$ with respect to $a_i$, such
that every path from $a_i$ to a point of $A$ passes through $g_i$.
Since $\tp(a_1/A)=\tp(a_2/A)$, we have $g_1=g_2=:g$. Note that $g$ is always of element type. 

Let $T_i$ denote the connected component of
$G_{\mathrm{tree}}(\mathbb{M})\setminus\{g\}$ containing $a_i$, and let
$\sigma\in Aut(\mathbb{M}/A)$ satisfy $\sigma(a_1)=a_2$. We note that $\sigma$ induces a unique automorphism of $G_{\mathrm{tree}}(\mathbb{M})$ and since $\sigma(g)=g$, it follows that $\sigma(T_1)=T_2$. If $T_1\neq T_2$, let
$f$ agree with $\sigma$ on $T_1$, with $\sigma^{-1}$ on $T_2$, and with
the identity on every other connected component of
$G_{\mathrm{tree}}(\mathbb{M})\setminus\{g\}$. If $T_1=T_2$, let $f$ agree with $\sigma$ on
$T_1$ and with the identity on every other connected component. In both
cases, $f$ is an automorphism of $\mathbb{M}$, since the connected
components of $G_{\mathrm{tree}}(\mathbb{M})\setminus\{g\}$ are disjoint, and 
$g$ is fixed.

Finally, since every path from $a_i$ to $B$ passes through $g$, we have that the components that contain $B$ are disjoint from $T_1$ and $T_2$. Hence $f$ fixes $B$ pointwise and sends $a_1$ to $a_2$. Therefore $tp(a_1/B)=tp(a_2/B)$.
\end{proof}

Finally, for what follows it will be convenient to work in the language
in which quantifier elimination for $Th(FG)$ was established. We first
recall the relevant terminology. A vertex $v$ of a subgraph of the Farey
graph is called \emph{removable} if either $v$ has valency at most one,
or $v$ has valency two and its two neighbours are adjacent.

\begin{fact}[{\cite{Farey_Graph}}]\label{FGQE}
A simple cycle $C$ in the Farey graph is called minimal if it has exactly
two removable vertices. If $x$ and $y$ are its removable vertices, we say
that $C$ is minimal for $x,y$. For $n\geq 1$, let $\mathcal C_n$ be the
set of isomorphism types of minimal triangulated cycles whose removable
vertices have distance $n$ in the cycle, and put
$\mathcal C=\bigcup_{n\geq 1}\mathcal C_n$.

For $C\in\mathcal C$, let $P_C(x,y)$ hold if there is a triangulated
cycle isomorphic to $C$ which is minimal for $x,y$. More generally, let
$\delta=(C_1,\ldots,C_m)$, where $C_i\in\mathcal C_{k_i}$. The binary
predicate $P_\delta(x,y)$ holds if there are connecting points
$z_0=x,z_1,\ldots,z_m=y$ such that $P_{C_i}(z_{i-1},z_i)$ holds for
$i=1,\ldots,m$ and
\[
d(x,y)=\sum_{i=1}^m k_i.
\]
For the empty sequence $\delta$, we put $P_\delta(x,y)$ if and only if
$x=y$.

For $n\geq1$, fix an enumeration
$v_1,\ldots,v_{|F_n|}$ of the Farey graph $F_n$ of level $n$. For
$\sigma\in\operatorname{Sym}(|F_n|)$, let $D_{n,\sigma}(x,y,z)$ express
that $x,y,z$ extend to a copy of $F_n$ in which, with respect to the
fixed enumeration, they occupy the positions prescribed by $\sigma$.

Finally, let
$\varepsilon=(\delta_1,\delta_2,\delta_3,n,\sigma)$, where
$\delta_1,\delta_2,\delta_3$ are finite, possibly empty, sequences in
$\mathcal C$. The ternary predicate $Y_\varepsilon(x,y,z)$ holds if there
exist $x',y',z'$ such that
$D_{n,\sigma}(x',y',z')$ holds and
\[
P_{\delta_1}(x,x'),\qquad
P_{\delta_2}(y,y'),\qquad
P_{\delta_3}(z,z')
\]
hold.

Let $L'$ be the language of graphs expanded by all predicates
$P_\delta$ and $Y_\varepsilon$ as above. Then $Th(FG)$ has quantifier
elimination in $L'$.
\end{fact}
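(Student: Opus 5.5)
The plan is a back-and-forth argument. Fix two $\aleph_0$-saturated (or sufficiently saturated) models $M,N\models Th(FG)$. Let $\mathcal I$ be the family of finite partial maps $f:\bar a\mapsto\bar b$, with $\bar a\subseteq M$ and $\bar b\subseteq N$, that preserve all atomic $L'$-formulas, i.e. graph adjacency, all $P_\delta$ and all $Y_\varepsilon$. To get quantifier elimination in $L'$ it suffices to show that $\mathcal I$ is nonempty (the empty map belongs to it) and has the forth property; back is symmetric. We use the description of models as $FG(\mathcal G)$ and the canonical forest $G_{\mathrm{tree}}$ of Fact~\ref{fact:Gtree}.

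\textbf{Step 1: atomic types determine algebraic closures.} We show that the quantifier-free $L'$-type of a finite tuple $\bar a$ determines the isomorphism type of the pointed structure $(\acl(\bar a),\bar a)$ (via Fact~\ref{FGAlc}), together with its position inside $G_{\mathrm{tree}}$.

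For two points $x,y$ lying in a single Farey graph $F$, the relative position of $(x,y)$ up to $\mathrm{Aut}(F)\cong PGL_2(\mathbb Z)$ is encoded by a minimal triangulated cycle for $x,y$. This cycle is the union of triangles crossed by geodesics, i.e. the "ladder" between $x$ and $y$, so it is recorded by the appropriate $P_C$.

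For $x,y$ in different Farey graphs of the same component, the tree geodesic passes through gluing vertices $z_1,\dots,z_{m-1}$. The predicate $P_\delta$ with the additivity condition $d(x,y)=\sum k_i$ records the sequence of these cycles, and hence the shape of the tree path together with the cycle in each Farey graph traversed.

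For three points, $D_{n,\sigma}$ pins down the configuration of the projections $x',y',z'$ to the branching Farey graph, which is the median in the tree. Here we use that an automorphism of the Farey graph is determined by its action on a triangle, and that any finite configuration in $F$ lies in some finite level $F_n$. The predicates $P_{\delta_i}$ then record the legs from $x,y,z$ to this branching graph.

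Since $\mathrm{Conv}_{\mathrm{tree}}(\bar a)$ is the union of these geodesics and medians, gluing this pairwise and triplewise data produces an isomorphism $\acl(\bar a)\cong\acl(\bar b)$ extending $f$ whenever $f\in\mathcal I$. Each Farey graph $F$ in the hull meets the rest in finitely many points, so the triple data fixes how $F$ is attached, up to the automorphisms of $F$ fixing those points.

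\textbf{Step 2: extending by a new element.} Given $f\in\mathcal I$ and $c\in M$, we split into cases.

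1. If $c$ lies in a component of $M$ disjoint from $\bar a$, saturation of $N$ provides a fresh component (there are infinitely many), and we choose a matching $d$ there.
2. If $c\in\acl(\bar a)$, we take $d$ to be the image of $c$ under the isomorphism from Step~1.
3. Otherwise let $g$ be the gate of $\acl(\bar a)$ towards $c$, as in the proof of Corollary~\ref{StationAlgClosed}. The gate $g$ lies in $\acl(\bar a)$, or $g$ is $\bar a$ itself if $|\bar a|=1$. The type of $c$ over $\bar a$ is then determined by $\tp(c,g)$ together with the data relating $g$ to $\bar a$. By saturation, the image $g'$ of $g$ lies in infinitely many Farey graphs, so we may pick a new branch at $g'$ of the same shape as the branch at $g$ containing $c$, and put $d$ at the corresponding position.
4. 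The remaining subcase is when $c$ lies in a Farey graph belonging to the hull but outside $\acl(\bar a)$; this cannot happen because the hull's Farey graphs lie inside $\acl$, so in case 3 the gate is a vertex.

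\textbf{Main obstacle.} In each case one must check that $\bar a c\mapsto\bar b d$ again preserves every $P_\delta$ and $Y_\varepsilon$; the verification in Step~1 is the hard part. The precise difficulty is showing that the quantifier-free $L'$-type of a pair or triple inside a single Farey graph determines its $PGL_2(\mathbb Z)$-orbit, and that for longer tuples these local orbits glue coherently. I would handle this by reducing to triples of points in one Farey graph: fix a triangle, use the uniqueness of the automorphism sending a triangle to a triangle, and encode all further points by their $D_{n,\sigma}$-relations to two base points of the triangle. Then one checks that compatible triple data yields a single global automorphism.
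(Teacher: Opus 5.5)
The paper does not prove this statement. It is imported from \cite{Farey_Graph} and used only through Remark~\ref{rem:components}, so there is no internal argument to compare with. Your strategy is the natural one: a back-and-forth between $\aleph_0$-saturated models, in which you read off from the atomic $L'$-type the shape of $Conv_{\mathrm{tree}}(\bar a)$ and the positions of the attachment points in each Farey graph of the hull, and then extend through the gate. However, there is a genuine gap exactly at the step you call the main obstacle, and the remedy you sketch does not work.

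Step~1 needs the following: if two finite tuples $\bar p,\bar q$ in one Farey graph $F$ satisfy the same atomic formulas on all pairs and triples, then $\bar q=\gamma\bar p$ for some $\gamma\in\mathrm{Aut}(F)$.

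You cannot ``fix a triangle''. The named tuple need not contain one, and the relations of $\bar a$ to an auxiliary triangle are not part of its quantifier-free type. Suppose instead you use two distinct points $p_1,p_2$ of the tuple as base points. Their pointwise stabiliser in $\mathrm{Aut}(F)\cong PGL_2(\mathbb Z)$ can be nontrivial. For $(0,\infty)$ it is $\{z\mapsto z,\ z\mapsto -z\}$, so $(0,\infty,p)$ and $(0,\infty,-p)$ satisfy the same atomic formulas. The triples $(p_1,p_2,p_i)$ therefore determine each $p_i$ only up to an involution $\tau$, and that choice is made independently for each $i$. Your sentence ``compatible triple data yields a single global automorphism'' is precisely the unproved claim that these choices can be made uniformly.

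Here is an argument that fills this gap.
\begin{itemize}
\item An element of $PGL_2(\mathbb Z)$ fixing two distinct points of $\widehat{\mathbb Q}$ is either the identity or the reflection of $\mathbb H^2$ in the geodesic joining them. Hyperbolic elements and glide reflections have irrational fixed points, and parabolic ones have a single fixed point. So the stabiliser of $(p_1,p_2)$ is contained in $\{1,\tau\}$.
\item Normalise so that $q_1=p_1$ and $q_2=p_2$. Then $q_i\in\{p_i,\tau p_i\}$ for all $i$.
\item Suppose $q_i=p_i\neq\tau p_i$ and $q_j=\tau p_j\neq p_j$. Then $p_1,p_i,p_j$ are distinct. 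Comparing $(p_1,p_i,p_j)$ with $(p_1,p_i,\tau p_j)$ produces the reflection $\tau'$ in the geodesic $p_1p_i$ with $\tau'(p_j)=\tau(p_j)$.
\item Since $p_i\neq p_2$, the axes of $\tau$ and $\tau'$ are distinct and asymptotic at $p_1$. Hence $\tau\tau'$ is parabolic with sole fixed point $p_1$. But $\tau\tau'(p_j)=p_j$, so $p_j=p_1$, contradicting $\tau p_j\neq p_j$.
\item Hence $\bar q\in\{\bar p,\tau\bar p\}$. Distinct blocks of the hull meet only in cut vertices, so the blockwise automorphisms glue.
\end{itemize}

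The verification in Step~2 is also only announced, not carried out. The key point is that the gate $g$ is a cut vertex and each block minus a vertex is connected. So every chain witnessing $P_\delta(c,a_i)$ passes through $g$. Thus $P_\delta(c,a_i)$ holds iff $\delta=\delta'\delta''$ with $P_{\delta'}(c,g)$ and $P_{\delta''}(g,a_i)$, and the $Y_\varepsilon$ reduce to data at $g$ in the same way.

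Finally, the median of three points in $G_{\mathrm{tree}}$ may be a vertex of $M$ rather than a Farey graph. In that case the ``branching Farey graph'' in your triple analysis does not exist, and the case must be treated separately.
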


\section{CM-triviality of the Farey Graph}

It is not hard to see that $Th(FG)$ does not eliminate imaginaries. 

\begin{lemma}
The first-order theory of the Farey graph does not eliminate imaginaries. 
\end{lemma}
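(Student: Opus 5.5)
We will exhibit an imaginary that is not interdefinable with any real tuple. The natural candidate is the code $e$ of an unordered pair $\{c,d\}$, where $a,b$ are adjacent vertices in a copy of the Farey graph and $c,d$ are their two common neighbours. A simpler and cleaner choice is the code $e=\{a,b\}$ of an unordered pair of adjacent vertices, i.e.\ the class of $(a,b)$ under the $\emptyset$-definable equivalence relation $E((x,y),(x',y'))\iff (x=x'\wedge y=y')\vee(x=y'\wedge y=x')$. We will use this one.

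Suppose, towards a contradiction, that $e$ is interdefinable with a finite real tuple $\bar c$. Then $\bar c\in\dcl^{eq}(e)\subseteq\acl^{eq}(e)$. Since $e\in\dcl(a,b)$, we get $\bar c\subseteq\acl(a,b)$. On the other hand, $e\in\dcl^{eq}(\bar c)$, so every automorphism fixing $\bar c$ fixes the set $\{a,b\}$.

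The key step is to find an automorphism $\tau$ of the Farey graph (hence of any model, acting inside one Farey piece and extended trivially or compatibly elsewhere) which swaps $a$ and $b$. Such a map exists by edge-transitivity of $PGL_2(\mathbb Z)$. Concretely, take $a=0$, $b=\infty$ and $\tau(x)=1/x$; this swaps $0$ and $\infty$ and fixes $1$ and $-1$, the two common neighbours.

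We then argue that no $\bar c$ works, as follows.
\begin{itemize}
\item Every element of $\dcl(\bar c)$ is fixed by every automorphism fixing $\bar c$.
\item Since $e\in\dcl^{eq}(\bar c)$ and $\bar c\in\dcl^{eq}(e)$, the automorphisms fixing $\bar c$ are exactly those fixing $e$ (setwise stabilizer of $\{a,b\}$).
\item Hence $\bar c$ is fixed by $\tau$, and $\bar c\subseteq\acl(a,b)$.
\end{itemize}
We then need to show that the pointwise stabilizer of $\bar c$ does not preserve $\{a,b\}$, or that the stabilizer of $\{a,b\}$ moves some element of $\bar c$. The cleanest route is a counting one. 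Automorphisms fixing $\{a,b\}$ setwise include $\tau$ and also the map $x\mapsto -x$, which fixes $0$ and $\infty$ and swaps $1$ and $-1$. Taking both, the group generated acts on the vertex set and $\bar c$ must lie in its fixed points. We identify those fixed points: in $\widehat{\mathbb Q}$, the fixed points of $x\mapsto 1/x$ are $\pm1$ and those of $x\mapsto -x$ are $0,\infty$, so there are no common fixed points. For a general model the analogous claim is that the stabilizer of $\{a,b\}$ has no fixed real vertex. There we use the $G_{\mathrm{tree}}$ description: any vertex outside the Farey piece $F$ containing $a,b$ has a gate in $F$, and automorphisms can move the gate.

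Hence $\bar c$ would have to be empty. But then $e\in\dcl^{eq}(\emptyset)$, which is absurd since $\{a,b\}$ has infinitely many conjugates under automorphisms. The main obstacle is the fixed-point check in an arbitrary saturated model, which requires extending $\tau$ and $x\mapsto -x$ compatibly across the glued pieces. We will handle this with the gate/component-wise construction used in the proof of Corollary \ref{StationAlgClosed}: define the automorphism on $F$, and on each component of $G_{\mathrm{tree}}$ hanging at a vertex $v\in F$, transport it to the component hanging at the image of $v$. This is possible because saturation makes all such hanging components isomorphic.
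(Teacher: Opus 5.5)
Your argument is correct, but it takes a different route from the paper.

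\textbf{The paper's route.} It puts $a$ and $b$ in two different connected components of the model $M_1\sqcup M_2$. A single involution swapping the two components preserves $\{a,b\}$ yet fixes no real element, so any nonempty code is moved. The empty code is excluded because $\{a,b\}$ is not $\emptyset$-definable.

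\textbf{Your route.} You code an edge $\{0,\infty\}$ of a single Farey graph and use its stabiliser $\{x,-x,1/x,-1/x\}$. There $x\mapsto 1/x$ and $x\mapsto -x$ have the disjoint fixed-point sets $\{\pm1\}$ and $\{0,\infty\}$.

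\textbf{What each buys.} Your route shows that the failure of elimination of imaginaries is intrinsic to one Farey graph. It already occurs in the connected model $FG$, and is not merely a consequence of being able to permute connected components. The paper's route buys economy: there is no fixed-point computation and no need to extend automorphisms across glued pieces.

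\textbf{Recovering that economy.} You can get most of it back.
\begin{enumerate}
\item The composite $x\mapsto -1/x$ of your two maps preserves $\{0,\infty\}$ and has no fixed point in $\widehat{\mathbb Q}$. It swaps $0$ and $\infty$, and any other fixed point would satisfy $x^2=-1$. So a single automorphism suffices, exactly as in the paper.
\item Since $\bar c\subseteq\dcl(a,b)\subseteq\acl(a,b)=F$ by Fact~\ref{FGAlc}, your gate argument for vertices outside $F$ is unnecessary. As stated, it also overlooks vertices in other connected components, which have no gate in $F$.
\item By compactness, elimination of imaginaries is expressed by first-order sentences and so holds in every model. The paper likewise argues in the non-saturated model $M_1\sqcup M_2$. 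You can therefore work in $FG$ itself and skip the extension step entirely.
\end{enumerate}

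\textbf{If you keep the saturated model.} Your extension step is valid, but the right justification is this: in a saturated model of cardinality $\kappa$, every vertex lies in exactly $\kappa$ Farey pieces, and $Aut(FG)$ is vertex-transitive. Hence the rooted subtrees hanging at any two vertices of $F$ are isomorphic via a back-and-forth along $G_{\mathrm{tree}}$.
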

\begin{proof}
Let $M\models Th(FG)$ be such that $M=M_1\sqcup M_2$,
where $M_1$ and $M_2$ are disjoint copies of the Farey graph. Choose
$a\in M_1$ and $b\in M_2$, and let $X=\{a,b\}$.
Suppose, towards a contradiction, that $X$ is eliminated by a finite
real tuple $\bar c$ from $M$.

We first observe that $\bar c\neq\emptyset$. Indeed, the set $X$ is not
$\emptyset$-definable, since there exists an automorphism of $M$ moving $a$
within $M_1$.

Choose an isomorphism $f\colon M_1\to M_2$ such that $f(a)=b$, and define
$\sigma\in Aut(M)$ by $\sigma|_{M_1}=f$ and $\sigma|_{M_2}=f^{-1}$. Then $\sigma(a)=b$ and $\sigma(b)=a$, and hence $\sigma(X)=X$. 
On the other hand, $\sigma$ exchanges the two disjoint connected
components of $M$ and therefore fixes no real element, hence it moves $\bar c$.
\end{proof}

We will show that it weakly eliminates them, i.e. every definable set has a smallest algebraically closed set over which it is definable. The latter will be a consequence of two propositions. First, we record the following immediate consequence of the description of algebraic closure in $Th(FG)$.

\begin{proposition}\label{DCCAlg}
Let $M$ be a model of $Th(FG)$. Then the algebraic closures of finite subsets of $M$ satisfy the descending chain condition.
\end{proposition}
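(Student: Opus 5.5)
We plan to attach to each algebraically closed set generated by a finite set a natural-number complexity that strictly drops along proper inclusions; no infinite strictly descending chain can then exist. All the work comes from Fact~\ref{FGAlc}.

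Let $A$ be finite and set $X=\acl(A)$. Write $A=\bigsqcup_i A_i$ according to the connected components of $M$; only finitely many $A_i$ are nonempty. By Fact~\ref{FGAlc}, $X=\bigsqcup_i \acl_{M_i}(A_i)$, and each piece is of one of three kinds: empty, a single point, or $Conv_M(A_i)$ with $|A_i|\ge 2$. In the third case, $Conv_{\mathrm{tree}}(A_i)$ is the convex hull of finitely many vertices in the forest $G_{\mathrm{tree}}(M)$. It is therefore a finite subtree, and it contains only finitely many Farey-graph vertices $F\in\mathcal F(M)$. Define
\[
\nu(X)=\sum_i \nu_i,
\]
where $\nu_i=0$ if the $i$-th piece is empty, $\nu_i=1$ if it is a point, and $\nu_i=2+N_i$ in the third case, with $N_i$ the number of Farey graphs $F\in\mathcal F(M)$ such that $F\subseteq X\cap M_i$. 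This quantity depends only on $X$ and not on the chosen generating set $A$. The reason is that in a component where $X$ has at least two points, $X\cap M_i$ is a union of Farey graphs, and $N_i$ simply counts the Farey graphs contained in it. Here we use that a Farey graph is determined by its vertex set, which is part of Fact~\ref{fact:Gtree}.

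Next we show that $X\supsetneq Y$, with both finitely generated and algebraically closed, forces $\nu(X)>\nu(Y)$. We compare component by component; all pieces satisfy $Y\cap M_i\subseteq X\cap M_i$.
\begin{itemize}
\item If a piece of $Y$ is a point, the corresponding piece of $X$ is nonempty, so $\nu_i(X)\ge\nu_i(Y)$.
\item If a piece of $Y$ is a union of at least one Farey graph, then so is the piece of $X$. Every Farey graph contained in $Y\cap M_i$ is contained in $X\cap M_i$, so $N_i(Y)\le N_i(X)$.
\end{itemize}
Thus $\nu$ is monotone, and it remains to prove strictness.

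For strictness, take a component where the pieces differ. If the piece of $Y$ is empty or a point, then the piece of $X$ has strictly larger $\nu_i$, because a strictly larger set cannot again be empty or a singleton. If both pieces are unions of Farey graphs and are distinct, then some Farey graph $F\subseteq X\cap M_i$ is not contained in $Y\cap M_i$, which gives $N_i(X)>N_i(Y)$. This last step needs care. We must know that when $X\cap M_i$ is a union of Farey graphs, the Farey graphs counted are exactly those in $Conv_{\mathrm{tree}}(A_i)$. We also must know that a Farey graph $F\subseteq Conv_M(A_i)$ belongs to $Conv_{\mathrm{tree}}(A_i)$.

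That containment claim is the only real obstacle. To prove it, we use the tree structure. Suppose $F\notin Conv_{\mathrm{tree}}(A_i)$. Then the gate of the convex subtree with respect to $F$ is a single vertex $g$ of element type, and every element of $F$ other than $g$ lies outside $Conv_M(A_i)$. Since $F$ has infinitely many elements, $F\not\subseteq Conv_M(A_i)$. With this claim, $N_i$ equals the number of Farey-graph vertices of the finite subtree, and the descending chain condition follows because $\nu$ takes values in $\mathbb N$.
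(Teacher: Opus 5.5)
Your argument is correct and is essentially the paper's. The paper orders the pair (number of Farey graphs contained in $X$, number of singleton components) lexicographically; your weights $0$, $1$, $2+N_i$ fold this into a single natural number, and your containment claim makes explicit the finiteness of the Farey-graph count, which the paper takes for granted.

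One small correction inside that claim: the gate of $Conv_{\mathrm{tree}}(A_i)$ with respect to $F$ need not be of element type. For example, if $F$ shares a vertex $e$ with some Farey graph $G\in Conv_{\mathrm{tree}}(A_i)$ and $e\notin Conv_{\mathrm{tree}}(A_i)$, then the gate is $G$, and $e\in Conv_M(A_i)$ even though $e\neq g$. The tree property still shows that at most one element of $F$ (the neighbour of $F$ on its geodesic to $Conv_{\mathrm{tree}}(A_i)$) lies in $Conv_M(A_i)$, which is all you need.
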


\begin{proof}
By Fact \ref{FGAlc}, every algebraically closed set $X=\acl(A)$ generated by a finite
subset $A$ of $M$ is a finite disjoint union of finite unions of copies of the
Farey graph and singleton sets.

For such a set $X$, we define the following ordered couple of natural numbers:
\[
\mu(X)=\bigl(f(X),s(X)\bigr)\in\mathbb N^2,
\]
where $f(X)$ is the number of copies of the Farey graph contained in $X$,
and $s(X)$ is the number of connected components of $X$ which are
singletons. We order $\mathbb N^2$ lexicographically.

If $Y\subsetneq X$ and both $X$ and $Y$ are algebraic closures of finite
sets, then either $Y$ contains strictly fewer copies of the Farey graph than
$X$, or it contains the same copies of the Farey graph and strictly fewer
singleton components. Notice that the first possibility includes the case
where one or more copies of the Farey graph are replaced by a singleton.
Hence
\[
\mu(Y)<_{\mathrm{lex}}\mu(X).
\]

Thus every strictly descending chain of algebraic closures of finite sets
induces a strictly descending chain in
\((\mathbb N^2,<_{\mathrm{lex}})\), which is impossible.
\end{proof}

We then only need to show that if a set $X$ is definable over $A$ and over $B$, where $A=acl(\bar a)$ and $B=acl(\bar b)$, then $X$ is definable over their intersection. For this we will be needing some preliminary results. 

%We first observe that for any two distinct elements $a$ and $b$ from a model of $Th(FG)$ we get that $acl(a)\cap acl(b)=\emptyset$. 

We first prove.

\begin{lemma}\label{rmk:dependency}
Let $A=\acl(\bar a)$ and $B=\acl(\bar b)$ be algebraically closed
subsets of a connected component of $\mathbb{M}\models Th(FG)$, with $A\cap B\neq \emptyset$. Then $\indep{A}{A\cap B}{B}$.

Moreover, for any $x\in\mathbb{M}$
\[
\depen{x}{A\cap B}{A}
\quad\Longrightarrow\quad
\indep{x}{A\cap B}{B}.
\]
\end{lemma}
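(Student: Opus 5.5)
Both claims will be reduced to the geodesic criterion of Fact \ref{FGFind}, combined with the convexity of algebraically closed sets in $G_{\mathrm{tree}}(\mathbb M)$. Work in the single connected component containing $A$ and $B$, and put $C=A\cap B$.

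The first step is to check that $C$ is algebraically closed and convex in the tree. By Fact \ref{FGAlc}, $A$ is either a singleton or the union of the Farey graphs in a convex subtree, and the same holds for $B$. Since the intersection of two subtrees of a tree is a subtree, $Conv_{\mathrm{tree}}(C)$ is contained in $Conv_{\mathrm{tree}}(A)\cap Conv_{\mathrm{tree}}(B)$. This lets us take $Conv_{\mathrm{tree}}(A\cap B)$ to be the trace on the vertex set of the intersection of the two hulls. Some care is needed because a vertex of $A$ may lie in $B$ while a Farey graph containing it does not. In any case $\acl(A)=A$, $\acl(B)=B$, and $C$ is nonempty by hypothesis.

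For the first claim, take a geodesic $\gamma$ in the tree from $p\in A$ to $q\in B$. We must show that $\gamma$ meets $Conv_{\mathrm{tree}}(C)$. Pick any $c\in C$. The three geodesics $[p,c]\subseteq Conv(A)$, $[c,q]\subseteq Conv(B)$ and $[p,q]$ form a tripod with center $m$ lying on all three. Then $m\in[p,c]\cap[c,q]\subseteq Conv(A)\cap Conv(B)$. If $m$ is an element vertex, it lies in $A\cap B$. If $m$ is a Farey-graph vertex $F$, then $F$ lies in both hulls, so $F\subseteq A\cap B$. Either way, $\gamma$ meets $Conv_{\mathrm{tree}}(C)$, and Fact \ref{FGFind} gives $\indep{A}{C}{B}$.

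For the second claim, suppose $\depen{x}{C}{A}$. Set $X=\acl(x)$ and let $g$ be the gate of $Conv(C)$ with respect to $x$, as in the proof of Corollary \ref{StationAlgClosed}. Dependence means that some geodesic from $x$ to a point of $A$ avoids $Conv(C)$. This forces the branch at $g$ containing $x$, namely the component of the tree minus $g$, to meet $Conv(A)\setminus Conv(C)$; otherwise every path from $x$ to $A$ would pass through $g$. If $B$ also entered that same branch, then by convexity $Conv(A)\cap Conv(B)$ would reach past $g$ into the branch. We must rule this out. The cleanest route is to argue that $Conv(A)$ and $Conv(B)$ each contain $g$ and a neighbour of $g$ in this branch. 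If $g$ is a Farey-graph vertex, those neighbours are element vertices of $g$, which lie in $C$; this is a contradiction unless the neighbours differ. If they differ, further care is needed, since $A$ and $B$ then enter the branch through different neighbours. It follows that every geodesic from $x$ to $B$ passes through $g\in Conv(C)$, so $\indep{x}{C}{B}$.

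I expect the main obstacle to be this last step. One has to control exactly how $Conv(A)$ and $Conv(B)$ leave $Conv(C)$, including the degenerate cases where $A$ or $B$ is a singleton and where the gate is an element vertex whose incident Farey graphs lie in only one of the hulls. I would first handle the generic case where both sets have at least two points, so that they are unions of Farey graphs. The singleton cases would then follow by a direct check.
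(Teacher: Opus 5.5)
Your argument for $\indep{A}{A\cap B}{B}$ is the paper's tripod argument. You also supply the vertex-type check that $Conv_{\mathrm{tree}}(A)\cap Conv_{\mathrm{tree}}(B)\subseteq Conv_{\mathrm{tree}}(A\cap B)$, which the paper only asserts. The second claim, however, is not proved. You stop at ``further care is needed'' and postpone both the different-neighbours case and the singleton cases, so as written nothing establishes $\indep{x}{A\cap B}{B}$.

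The obstacle you flag does not exist, and the contradiction you need is already contained in what you wrote.
\begin{itemize}
\item In a tree, every component of $G_{\mathrm{tree}}(\mathbb M)\setminus\{g\}$ contains exactly one neighbour of $g$, since two would close a cycle through $g$. So $A$ and $B$ cannot enter the branch $T_x$ containing $x$ through different neighbours.
\item A Farey-type gate never occurs when $x\notin A\cap B$. Every element of a Farey vertex of $Conv_{\mathrm{tree}}(A\cap B)$ lies in $A\cap B$; this is the remark that $g$ is always of element type in the proof of Corollary~\ref{StationAlgClosed}.
\end{itemize}
The missing line is then short. By the gate property, $T_x\cap Conv_{\mathrm{tree}}(A\cap B)=\emptyset$. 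If $B$ also met $T_x$, both hulls would contain $g$ and the unique neighbour $g'$ of $g$ in $T_x$. Your first part then gives $g'\in Conv_{\mathrm{tree}}(A)\cap Conv_{\mathrm{tree}}(B)\subseteq Conv_{\mathrm{tree}}(A\cap B)$, a contradiction. Hence $B\cap T_x=\emptyset$, every geodesic from $x$ to $B$ passes through $g$, and no separate singleton analysis is needed. With this line your gate argument is complete.

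The paper reaches the same conclusion without gates. It fixes $a\in A$ with $[x,a]$ disjoint from $Conv_{\mathrm{tree}}(A\cap B)$ and takes any $b\in B$. The centre $m$ of the tripod on $a,b,x$ lies on $[a,x]$, hence outside $Conv_{\mathrm{tree}}(A\cap B)$. So the point where $[a,b]$ meets $Conv_{\mathrm{tree}}(A\cap B)$, which exists by the first part, lies on $[m,b]\subseteq[x,b]$. This reuses the first part directly and needs no case distinctions.
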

\begin{proof}
Let $a\in A$ and $b\in B$, and choose $d\in A\cap B$. The geodesics
joining $a,b,d$ form a tripod in $G_{\textrm{tree}}(\mathbb{M})$, and let $m$ denote its center. The geodesic joining $a$ with $d$ is contained in $Conv_{\textrm{tree}}(A)$. Similarly, the geodesic joining $b$ with $d$ is contained in $Conv_{\textrm{tree}}(B)$. It follows that $m\in Conv_{\textrm{tree}}(A) \cap Conv_{\textrm{tree}}(B)$, which is $Conv_{\textrm{tree}}(A\cap B)$. On the other hand, by definition of the center of the tripod, $m$ also belongs to the geodesic joining $a$ with $b$. Hence every geodesic joining an element of $A$ to an element of $B$ meets $Conv_{\textrm{tree}}(A\cap B)$. The conclusion now follows from Fact \ref{FGFind}.

For the second assertion, we may assume that the vertex corresponding to $x$ belongs to the same connected component of $G_{\textrm{tree}}(\mathbb{M})$  as $A$ and $B$, since otherwise the conclusion is immediate by Fact~\ref{FGFind}. Let $a\in A$ and $b\in B$. By the first part, the geodesic joining $a$ and $b$ meets $Conv_{\textrm{tree}}(A\cap B)$. On the other hand, since $\depen{x}{A\cap B}{A}$, Fact~\ref{FGFind} implies that there is $a\in A$ such that the geodesic joining $x$ with $a$ is disjoint from  $Conv_{\textrm{tree}}(A\cap B)$.

We fix the witness $a\in A$ of the forking condition and we take $b\in B$ arbitrary. Let $m$ denote the center of the tripod formed by $a,b$ and $x$. Since the geodesic from $a$ to $x$ is disjoint from $Conv_{\textrm{tree}}(A\cap B)$, whereas the geodesic from $a$ to $b$ meets it, the vertex $m$ does not belong to $Conv_{\textrm{tree}}(A\cap B)$. Hence every point of the intersection of the geodesic from $a$ to $b$ with $Conv_{\textrm{tree}}(A\cap B)$ lies on the segment from $m$ to $b$. Since this segment is contained in the geodesic $[b,x]$, the latter geodesic also meets the $Conv_{\textrm{tree}}(A\cap B)$. As $b$ was arbitrary, another application of Fact~\ref{FGFind} yields $\indep{x}{A\cap B}{B}$.      
\end{proof}

We next record the component-wise version of the preceding result.

\begin{corollary} \label{IndAlgClosed}

Let $A=\acl(\bar a)$ and $B=\acl(\bar b)$ be algebraically closed subsets of
$\mathbb{M}\models Th(FG)$. Suppose that every connected component of $G_{\mathrm{tree}}(\mathbb{M})$ meeting both $A$ and $B$ contains a point of $A\cap B$. Then
\[
\indep{A}{A\cap B}{B}.
\]
Moreover, for any $x\in\mathbb{M}$, 
\[
\depen{x}{A\cap B}{A}
\quad\Longrightarrow\quad
\indep{x}{A\cap B}{B}.
\] 
\end{corollary}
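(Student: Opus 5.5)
The plan is to reduce to Lemma \ref{rmk:dependency}, applied one connected component at a time, using Fact \ref{FGAlc} (algebraic closure is computed componentwise) and Fact \ref{FGFind} (forking is detected by geodesics).

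For each connected component $M_i$ of $\mathbb M$, put $A_i=A\cap M_i$ and $B_i=B\cap M_i$. By Fact \ref{FGAlc}, each $A_i$ is algebraically closed, namely $A_i=\acl_{M_i}(\bar a\cap M_i)$, and likewise for $B_i$. Moreover $A\cap B=\bigsqcup_i (A_i\cap B_i)$.

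Now let $a\in A$ and $b\in B$. If $a$ and $b$ lie in different components of $G_{\mathrm{tree}}(\mathbb M)$, there is no geodesic joining them, so the criterion of Fact \ref{FGFind} holds vacuously for this pair. If $a,b$ lie in the same component $M_i$, that component meets both $A$ and $B$. By hypothesis $A_i\cap B_i\neq\emptyset$, so Lemma \ref{rmk:dependency} applies to $A_i,B_i$. Its proof shows that the geodesic from $a$ to $b$ meets $Conv_{\mathrm{tree}}(A_i\cap B_i)\subseteq Conv_{\mathrm{tree}}(A\cap B)$. Hence every geodesic from $A$ to $B$ meets $Conv_{\mathrm{tree}}(A\cap B)$, and Fact \ref{FGFind} gives $\indep{A}{A\cap B}{B}$. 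Here $\acl(A)=A$ and $\acl(B)=B$, so no closure issues arise.

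For the second assertion, suppose $\depen{x}{A\cap B}{A}$. By Fact \ref{FGFind}, some geodesic from a point of $\acl(x)=\{x\}$ to some $a\in A$ misses $Conv_{\mathrm{tree}}(A\cap B)$. In particular $x$ and $a$ lie in a common component $M_i$. We must show that every geodesic from $x$ to a point $b\in B$ meets $Conv_{\mathrm{tree}}(A\cap B)$. Points $b$ in other components impose no condition, so it suffices to consider $b\in B_i$; if $B_i=\emptyset$ we are done.

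Otherwise $M_i$ meets both $A$ and $B$, so $A_i\cap B_i\neq\emptyset$ by hypothesis. The path from $x$ to $a$ misses $Conv_{\mathrm{tree}}(A_i\cap B_i)$, so within $M_i$ we have $\depen{x}{A_i\cap B_i}{A_i}$. Lemma \ref{rmk:dependency} then gives $\indep{x}{A_i\cap B_i}{B_i}$, that is, every geodesic from $x$ to $B_i$ meets $Conv_{\mathrm{tree}}(A_i\cap B_i)\subseteq Conv_{\mathrm{tree}}(A\cap B)$. This is the required conclusion.

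The main point of care is the passage between global and componentwise convex hulls. When $A\cap B$ meets several components, $Conv_{\mathrm{tree}}(A\cap B)$ should be read as the union of the componentwise hulls, since there are no geodesics between distinct components. I would state this convention explicitly. The remaining work is to check that the hypotheses of Lemma \ref{rmk:dependency} transfer to the pieces $A_i,B_i$, which holds because the sets $A_i,B_i$ are algebraically closed by Fact \ref{FGAlc}.
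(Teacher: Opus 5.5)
Your argument is correct and is essentially the paper's own route: the paper states this corollary without further proof as ``the component-wise version'' of Lemma~\ref{rmk:dependency}, and you have written out that reduction, namely restricting to each component via Fact~\ref{FGAlc}, applying the lemma wherever $A_i\cap B_i\neq\emptyset$, and observing that components meeting only one of $A$, $B$ impose no geodesic condition in Fact~\ref{FGFind}. Your explicit convention that $Conv_{\mathrm{tree}}(A\cap B)$ is the union of the componentwise hulls is exactly what is needed to make that passage precise.
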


We next prove that forking is trivial. 

\begin{proposition}\label{TrivialityForking}
Let $\mathbb{M}\models Th(FG)$, $A\subset \mathbb{M}$ and $a_1, a_2, a_3$ are pairwise independent over $A$. Then $\{a_1, a_2, a_3\}$ is an independent set over $A$.
\end{proposition}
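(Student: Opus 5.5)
Since $\{a_1,a_2,a_3\}$ is independent over $A$ exactly when each $a_i$ is independent from the other two over $A$, by symmetry it suffices to show $\indep{a_1}{A}{a_2a_3}$. The plan is to verify this with the geometric criterion of Fact~\ref{FGFind}. We may replace $A$ by $\acl(A)$, since forking independence over $A$ and over $\acl(A)$ coincide. Then $Conv_{\mathrm{tree}}(A)$ is a convex subforest, which we denote $K$. What has to be shown is that every geodesic in $G_{\mathrm{tree}}(\mathbb M)$ from a point $u\in\acl(a_1)$ to a point $w\in\acl(a_2,a_3)$ meets $K$.

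We split according to connected components. If $a_2$ and $a_3$ lie in different components, then by Fact~\ref{FGAlc} the set $\acl(a_2,a_3)$ is just $\{a_2\}\sqcup\{a_3\}$. In that case the required geodesic condition reduces to the pairwise hypotheses $\indep{a_1}{A}{a_2}$ and $\indep{a_1}{A}{a_3}$. If $a_1$ lies in a component different from those of $a_2,a_3$, there are no geodesics to check and the claim is immediate. So the main case is when $a_1,a_2,a_3$ all lie in one connected component. Then $\acl(a_1)=\{a_1\}$, and $\acl(a_2,a_3)=Conv_{M}(a_2,a_3)$ is the union of the Farey graphs on the tree geodesic $[a_2,a_3]$.

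Take $w\in\acl(a_2,a_3)$. Then $w$ lies in some Farey graph $F$ on the geodesic $[a_2,a_3]$. Consider the tripod formed by $a_1,a_2,a_3$ in the tree, with center $m$. The key step is to show that the geodesic $[a_1,w]$ is contained in $[a_1,a_2]\cup[a_1,a_3]\cup\{\text{the edge } F\text{--}w\}$. This holds because $[a_1,w]$ runs from $a_1$ to the point $F$ of $[a_2,a_3]$, passing through $m$ or along one leg, and then takes one more step from $F$ to $w$. More precisely, $[a_1,F]\subseteq[a_1,a_2]\cup[a_1,a_3]$, since $F$ lies on $[m,a_2]$ or on $[m,a_3]$. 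By hypothesis, $[a_1,a_2]$ meets $K$ and $[a_1,a_3]$ meets $K$. Using convexity of $K$, we need that some point of $K$ actually lies on $[a_1,w]$.

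This is the main obstacle. The hypothesis only says each leg meets $K$ somewhere, possibly on the part beyond $F$ (between $F$ and $a_2$). I would argue as follows. Suppose $K$ met $[a_1,a_2]$ only at points strictly past $F$ toward $a_2$, and met $[a_1,a_3]$ only at points on the $a_3$ side. Then by convexity $K$ would contain the geodesic between those points. That geodesic passes through $F$, which lies on $[a_2,a_3]$ between them. Hence $F\in K$ and $F\in[a_1,w]$.

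The remaining subcase is when $K$ meets a leg at a point already on $[a_1,F]$, which gives the claim directly. One must also handle $w$ equal to $a_2$ or $a_3$, which is immediate from the hypothesis, and $w$ in the Farey graph at the endpoint. A careful case analysis of where the intersection points sit relative to $m$ and $F$, using that in a tree the geodesic between two points of the convex set $K$ lies in $K$, completes the argument. Fact~\ref{FGFind} then yields $\indep{a_1}{A}{a_2a_3}$.
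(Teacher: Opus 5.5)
Your proposal is correct and follows essentially the paper's proof. You reduce to a single component, observe that the path from $a_1$ to the Farey vertex $F\ni w$ on $[a_2,a_3]$ runs through the tripod centre, and use convexity of $K=Conv_{\mathrm{tree}}(A)$ to put the geodesic between the points where $K$ meets $[a_1,a_2]$ and $[a_1,a_3]$ inside $K$. One small slip, which the paper also passes over: $F\in[a_1,w]$ fails when $w$ is the tree-neighbour of $F$ on the side of $a_1$, so if the only point of $K$ you have located there is $F$ itself, use your reduction $A=\acl(A)$ and Fact~\ref{FGAlc} to get $w\in A\subseteq K$.
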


\begin{proof}
We may assume that $a_1,a_2,a_3$ lie in the same connected component
of $G_{\mathrm{tree}}(\mathbb{M})$ and $A$ must intersect this component non-trivially, since otherwise the conclusion follows
immediately from Fact~\ref{FGFind}.

It is enough to prove that $a_1$ is independent from $a_2,a_3$ over
$A$. By pairwise independence, the geodesics joining $a_1$ to $a_2$
and $a_1$ to $a_3$ meet $Conv_{\textrm{tree}}(A)$. 

Let $v\in acl(a_2,a_3)$. By Fact \ref{FGAlc} either $v$ lies on the
geodesic $[a_2,a_3]$, in which case we set $w_v=v$, or there is a
unique Farey-graph type vertex $w_v\in[a_2,a_3]$ such that $v\in w_v$.  Choose $d_2\in [a_1,a_2]\cap Conv_{\mathrm{tree}}(A)$ and
$d_3\in [a_1,a_3]\cap Conv_{\mathrm{tree}}(A)$. Since $Conv_{\mathrm{tree}}(A)$ is convex, the geodesic joining $d_2$ and $d_3$ is contained in it. Moreover, since $w_v\in[a_2,a_3]$, the geodesic joining $a_1$ to $w_v$ meets the geodesic joining $d_2$ to $d_3$. Hence the geodesic joining $a_1$ to $w_v$ meets $Conv_{\mathrm{tree}}(A)$. It follows that the geodesic joining $a_1$ to $v$ meets $Conv_{\mathrm{tree}}(A)$.

Therefore every geodesic from $a_1$ to
$acl(a_2,a_3)$ meets
$Conv_{\mathrm{tree}}(A)$. By
Fact~\ref{FGFind}, $a_1$ is independent from $a_2, a_3$ over $A$.
By symmetry, $\{a_1,a_2,a_3\}$ is independent over $A$.
\end{proof}

Although not needed in the remainder of the paper, we record the following immediate corollary. 

\begin{corollary}
No infinite group is interpretable in $Th(FG)$.
\end{corollary}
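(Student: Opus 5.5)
We plan to derive the corollary from the triviality of forking in Proposition~\ref{TrivialityForking}, using the standard fact that a stable theory with trivial forking interprets no infinite group. The argument is by contradiction. Suppose an infinite group $G$ is interpretable in $Th(FG)$, so $G$ lives in $\mathbb{M}^{eq}$. Since $Th(FG)$ is $\omega$-stable, $G$ is an $\omega$-stable group, defined over some finite set of parameters $C$. We add $C$ to the language; the hypothesis of Proposition~\ref{TrivialityForking} allows an arbitrary base set $A$, so naming parameters costs nothing.

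The first step is to transfer triviality from real elements to imaginaries. Proposition~\ref{TrivialityForking} is stated for real elements, but forking of imaginaries reduces to forking of real tuples. By the weak elimination of imaginaries established in this section, each imaginary $e$ is interalgebraic with a real finite tuple $\bar a_e$, meaning $e\in\dcl^{eq}(\bar a_e)$ and $\bar a_e\in\acl^{eq}(e)$. Independence is insensitive to replacing elements by interalgebraic ones, so it suffices to have triviality for finite real tuples. For tuples we intend to rerun the proof of Proposition~\ref{TrivialityForking} verbatim: the key fact there is that every geodesic from $\acl(\bar a_1)$ to $\acl(\bar a_2\bar a_3)$ meets $Conv_{\mathrm{tree}}(A)$. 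This follows from the same tripod/convexity argument, because $\acl(\bar a_2\bar a_3)$ is contained in the union of the Farey graphs on $Conv_{\mathrm{tree}}(\bar a_2\bar a_3)$, which is the convex hull of $Conv_{\mathrm{tree}}(\bar a_2)\cup Conv_{\mathrm{tree}}(\bar a_3)$.

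The second step is the group argument itself. Take a generic type of $G$ over $C$, and let $g,h$ be independent generics over $C$. Then $g$, $h$ and $gh$ are pairwise independent over $C$, since each product of a generic with an independent element is generic and independent from that element. By triviality they form an independent set. However, $gh\in\dcl(g,h)$, so $gh$ forks with $\{g,h\}$ over $C$ unless $gh$ is algebraic over $C$. The latter is impossible, because $G$ is infinite and so its generics are non-algebraic. This contradiction shows that $G$ must be finite.

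The main obstacle is the transfer in the first step. The extension of Proposition~\ref{TrivialityForking} from real elements to real tuples and then to imaginaries needs care, because for a tuple the convex hull $Conv_{\mathrm{tree}}(\bar a_2\bar a_3)$ is no longer a single geodesic. The replacement claim I would verify first is: if geodesics from every point of $Conv(\bar a_1)$ to $Conv(\bar a_2)$, and to $Conv(\bar a_3)$, meet the convex set $Conv_{\mathrm{tree}}(A)$, then so do geodesics to any point of the hull of their union. This follows from the tripod argument in the tree $G_{\mathrm{tree}}(\mathbb{M})$, exactly as in the proof of Lemma~\ref{rmk:dependency}.
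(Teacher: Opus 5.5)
Your proposal is correct and follows the route the paper has in mind. The paper gives no proof; it records the corollary as immediate from the triviality of forking in Proposition~\ref{TrivialityForking}, via the standard argument that independent generics $g,h$ make $g,h,gh$ pairwise independent but not independent as a set. Your write-up makes explicit the two steps the paper leaves unstated: extending triviality from single elements to finite real tuples by the same tripod argument, and passing to imaginaries via weak elimination of imaginaries (Theorem~\ref{WEI}), which is not circular since that theorem's proof does not use this corollary.
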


%\begin{remark}\cite{Farey_Graph}
%Let $M$ be a model of $T_{FG}$ and $a \in M$. Then $acl(a) = a$ .
%    
%\end{remark}
%Therefore, for every $a, b \in M$, $acl(a) \cap acl(b) = \emptyset$.

%%%    \[ Aut(M/A \cap B)= \neq \langle Aut(M/A), Aut(M/B)\rangle \]

%\begin{remark}\label{rmk:dependency}
%    Suppose that $M$ is a saturated model of $T_{FG}$, $A = acl(\bar{a})$, $B = acl(\bar{b})$ and $A \cap B \neq \emptyset$, $x \in M$.
%    Then if $\depen{x}{A \cap B}{A}$, then 
%    $\indep{x}{A \cap B}{B}$.
%\end{remark}
\begin{comment}
    Prove triviality here!
\end{comment}

\begin{lemma}\label{Intersect}
Let $M\models Th(FG)$ be sufficiently saturated, and let
$A=\acl(\bar a)$ and $B=\acl(\bar b)$ be algebraically closed subsets of
$M$. Suppose that every connected component of
$G_{\mathrm{tree}}(M)$ meeting $A$ or $B$ contains a point of
$A\cap B$. 
Then, for every $n$, every $\bar x\in M^n$, and every
$\sigma\in Aut(M/A\cap B)$, there exists $\delta\in\langle Aut(M/A), Aut(M/B)\rangle$ such that $\delta(\bar x)=\sigma(\bar x)$.
\end{lemma}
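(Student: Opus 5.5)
We work with $C:=A\cap B$, which is algebraically closed, and with $G:=\langle Aut(M/A), Aut(M/B)\rangle\le Aut(M/C)$. The plan is to show that $\tp(\bar x/C)$ has a unique $G$-orbit on its realizations, that is, for every $\bar y$ with $\tp(\bar y/C)=\tp(\bar x/C)$ some element of $G$ sends $\bar x$ to $\bar y$. We then apply this with $\bar y=\sigma(\bar x)$. The tools are the independence statements of Corollary \ref{IndAlgClosed} and stationarity over algebraically closed sets (Corollary \ref{StationAlgClosed}).

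\textbf{Step 1 (independent case).} Suppose first that $\indep{\bar x}{C}{A}$ and $\indep{\bar y}{C}{A}$. Since $C$ is algebraically closed, $\tp(\bar x/C)$ is stationary by Corollary \ref{StationAlgClosed}. Hence $\tp(\bar x/A)=\tp(\bar y/A)$, and some element of $Aut(M/A)\subseteq G$ maps $\bar x$ to $\bar y$. The same argument works with $B$ in place of $A$.

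\textbf{Step 2 (reduction to Step 1).} For general $\bar x,\bar y$ we connect them by a chain of realizations of $\tp(\bar x/C)$. The hypothesis that every component meeting $A$ or $B$ meets $A\cap B$ lets us apply Corollary \ref{IndAlgClosed}, so $\indep{A}{C}{B}$. Write $\bar x=\bar x_0\bar x_1$, where $\bar x_0$ consists of the coordinates independent from $A$ over $C$, coordinatewise via the gate description in Fact \ref{FGFind}. By the ``moreover'' clause of Corollary \ref{IndAlgClosed}, each coordinate $x$ that forks with $A$ over $C$ is independent from $B$ over $C$.

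By saturation, choose $\bar z\models\tp(\bar x/C)$ with $\indep{\bar z}{C}{AB\bar x\bar y}$. We aim to move $\bar x$ to $\bar z$ in two stages. First, apply an automorphism fixing $B$ that moves only the coordinates forking with $A$ (these are independent from $B$). Second, apply an automorphism fixing $A$ that moves the rest. This is justified by a gate-based surgery: we glue automorphisms on the connected components of $G_{\mathrm{tree}}(M)\setminus\{g\}$ at the relevant gates $g\in Conv_{\mathrm{tree}}(C)$, exactly as in the proof of Corollary \ref{StationAlgClosed}. Doing the same for $\bar y$ and composing yields $\delta\in G$ with $\delta(\bar x)=\bar y$.

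\textbf{Main obstacle.} The hard part is the mixed tuple, where some coordinates fork with $A$ and others fork with $B$ (over $C$), while the relations among them, such as lying in common Farey subgraphs, must be preserved. To handle this, I would group coordinates by their gate $g$ onto $Conv_{\mathrm{tree}}(C)$ and the branch of $G_{\mathrm{tree}}(M)\setminus\{g\}$ they lie in. Coordinates in different branches can be moved independently by the piecewise-automorphism construction. Within one branch that is not contained in $Conv_{\mathrm{tree}}(C)$, the branch meets at most one of $A\setminus C$ and $B\setminus C$, since $\indep{A}{C}{B}$ and both are convex. So a single automorphism fixing $B$ (respectively $A$) handles that branch. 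The delicate points are branches at Farey-type vertices of $Conv_{\mathrm{tree}}(C)$, and coordinates lying inside a Farey graph of $C$. The latter are fixed by $\sigma$ anyway, because $C$ is algebraically closed and so contains whole Farey graphs, so their gates are of element type as noted earlier. This leaves only element-type gates, where the surgery is legitimate.
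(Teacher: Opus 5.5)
Your proposal is correct and has the same skeleton as the paper's proof. With $C=A\cap B$, both take a realization $\bar z$ of $\tp(\bar x/C)$ independent over $C$ from everything in sight, split the coordinates according to the side on which they fork, move one block by an automorphism over $B$ and the other by one over $A$, and compose with the inverse of the analogous move for $\sigma(\bar x)$. The paper splits according to forking with $B$ rather than $A$, which is immaterial.

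The difference is in how the two moves are justified. The paper first proves, by a short geodesic argument, that the block of coordinates forking with $B$ is independent over $C$ from the remaining ones. It then obtains both automorphisms abstractly from Proposition~\ref{TrivialityForking} and stationarity over $C$ (Corollary~\ref{StationAlgClosed}).

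You build the automorphisms by hand. Taking $\tau\in Aut(M/C)$ with $\tau(\bar x)=\bar z$, you swap each branch $T$ hanging off an element-type gate in $Conv_{\mathrm{tree}}(C)$ with $\tau(T)$. Your observation that such a branch meets at most one of $A\setminus C$ and $B\setminus C$ does exactly the work of the paper's cross-independence claim. It is this observation, not the bare fact that the moved coordinates are independent from $B$, that lets the move over $B$ leave the other coordinates in place.

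The paper's route is shorter, but it uses Proposition~\ref{TrivialityForking} in a stronger form than stated: for tuples and sets, and without pairwise independence of all three pieces. Yours is more elementary and exhibits the automorphisms explicitly, at the cost of bookkeeping you leave implicit:
\begin{itemize}
\item the branches $\tau(T)$ must be distinct from one another and from every branch containing a coordinate of $\bar x$, and must avoid $A\cup B$; all of this follows from $\indep{\bar z}{C}{AB\bar x\bar y}$;
\item the relevant branches must be \emph{disjoint from} $Conv_{\mathrm{tree}}(C)$, not merely not contained in it;
\item coordinates in connected components of $M$ missing $C$ have no gate and need separate, easy treatment. For instance, swap the whole component $K$ with $\tau(K)$; this fixes $A\cup B$ by the hypothesis of the lemma.
\end{itemize}
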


\begin{proof}
Let $\bar x\in M^n$ and let $\sigma\in Aut(M/A\cap B)$. Put
$\sigma(\bar x)=\bar y$. By the extension property for non-forking and
the saturation of $M$, we may choose a tuple $\bar y'$ such that
$\tp(\bar y'/A\cap B)=\tp(\bar x/A\cap B)$ and
\[
\indep{\bar y'}{A\cap B}{A\cup B}.
\]

Reordering the coordinates if necessary, we may assume that
$x_i$ forks with $B$ over $A\cap B$ precisely for
$1\leq i\leq m$. By Corollary~\ref{IndAlgClosed}, each $x_i$ is independent from $A$ over
$A\cap B$. Hence, by Proposition~\ref{TrivialityForking},
$x_1,\ldots,x_m$ is independent from $A$ over $A\cap B$.

We next claim that
$x_1,\ldots,x_m$ is independent from
$x_{m+1},\ldots,x_n$ over $A\cap B$. Otherwise, by
Proposition~\ref{TrivialityForking}, there are indices
$1\leq k\leq m$ and $m+1\leq j\leq n$ such that
$x_k$ forks with $x_j$ over $A\cap B$. Thus there is a geodesic
from $x_k$ to $x_j$ avoiding $Conv_{\mathrm{tree}}(A\cap B)$.
Since $x_k$ forks with $B$ over $A\cap B$, there is also a geodesic
from $x_k$ to some element of $B$ avoiding
$Conv_{\mathrm{tree}}(A\cap B)$. As $G_{\mathrm{tree}}(M)$ is a
forest, the geodesic from $x_j$ to this element of $B$ is contained
in the union of the previous two geodesics, and therefore also
avoids $Conv_{\mathrm{tree}}(A\cap B)$. This contradicts the choice
of $x_j$.

The same argument, using the fact that
$\bar y'$ is independent from $B$ over $A\cap B$, shows that
$y'_{m+1},\ldots,y'_n$ is independent from
$x_1,\ldots,x_m$ over $A\cap B$.

Since
$x_{m+1},\ldots,x_n$ is independent from $B$ over $A\cap B$ and
independent from $x_1,\ldots,x_m$ over $A\cap B$,
Proposition~\ref{TrivialityForking} yields that it is independent from
$B\cup\{x_1,\ldots,x_m\}$ over $A\cap B$. Likewise,
$y'_{m+1},\ldots,y'_n$ is independent from
$B\cup\{x_1,\ldots,$ $x_m\}$ over $A\cap B$. Since
$$
\tp(x_{m+1},\ldots,x_n/A\cap B)
=
\tp(y'_{m+1},\ldots,y'_n/A\cap B),
$$
Corollary~\ref{StationAlgClosed} implies that
$$
\tp(x_{m+1},\ldots,x_n/B,x_1,\ldots,x_m)
=
\tp(y'_{m+1},\ldots,y'_n/B,x_1,\ldots,x_m).
$$
Hence there is an automorphism
$\delta_B\in Aut(M/B,x_1,\ldots,x_m)$ sending
$(x_{m+1},\ldots,$ $x_n)$ to
$(y'_{m+1},\ldots,y'_n)$. Since $\delta_B$ fixes $A\cap B$, the tuple
$(x_1,\ldots,x_m,y'_{m+1},\ldots,y'_n)$ has the same type over
$A\cap B$ as $\bar y'$. Moreover, by
Proposition~\ref{TrivialityForking}, it is independent from $A$ over
$A\cap B$, as is $\bar y'$. Applying
Corollary~\ref{StationAlgClosed} once more, we obtain
$$
\tp(x_1,\ldots,x_m,y'_{m+1},\ldots,y'_n/A)
=
\tp(\bar y'/A).
$$
Therefore there is an automorphism
$\delta_A\in Aut(M/A,y'_{m+1},\ldots,y'_n)$ sending
$(x_1,\ldots,$ $x_m)$ to $(y'_1,\ldots,y'_m)$.
Consequently,
$$
\delta_A\circ\delta_B(\bar x)=\bar y'.
$$

Applying the same argument to $\bar y$ and $\bar y'$, we obtain an
element of $\langle Aut(M/A),$ $Aut(M/B)\rangle$ sending $\bar y$ to
$\bar y'$. Taking its inverse and composing with
$\delta_A\circ\delta_B$, we obtain an element of
$\langle Aut(M/A), Aut(M/B)\rangle$ sending $\bar x$ to $\bar y$,
as required.
\end{proof}

\begin{corollary}\label{cor:def-with-intersections}
Let $M\models Th(FG)$ be sufficiently saturated, and let
$A=\acl(\bar a)$ and $B=\acl(\bar b)$ be algebraically closed subsets of
$M$. Suppose that every connected component of
$G_{\mathrm{tree}}(M)$ meeting $A$ or $B$ contains a point of
$A\cap B$. If a set $X\subseteq M^n$ is definable over both $A$ and
$B$, then $X$ is definable over $A\cap B$.
\end{corollary}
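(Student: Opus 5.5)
The plan is to deduce the corollary directly from Lemma~\ref{Intersect}, using the standard Galois-theoretic criterion for definability in a sufficiently saturated (strongly homogeneous) model. For a small set $C$, a definable set $X\subseteq M^n$ is definable over $C$ if and only if $\sigma(X)=X$ for every $\sigma\in Aut(M/C)$. Here $A\cap B$ is small, being contained in the algebraic closure of a finite set. So it suffices to show that every $\sigma\in Aut(M/A\cap B)$ fixes $X$ setwise.

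First observe that $X$ is invariant under the subgroup $\langle Aut(M/A),Aut(M/B)\rangle$. Indeed, $X$ is definable over $A$, so every element of $Aut(M/A)$ fixes $X$ setwise; likewise every element of $Aut(M/B)$ does. The setwise stabilizer of $X$ is a group, so it contains the group generated by these two subgroups.

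Now fix $\sigma\in Aut(M/A\cap B)$ and $\bar x\in X$. The hypotheses of Lemma~\ref{Intersect} coincide with those of the corollary, so the lemma gives $\delta\in\langle Aut(M/A),Aut(M/B)\rangle$ with $\delta(\bar x)=\sigma(\bar x)$. By the previous paragraph $\delta(\bar x)\in X$, hence $\sigma(\bar x)\in X$, so $\sigma(X)\subseteq X$. Applying the same argument to $\sigma^{-1}\in Aut(M/A\cap B)$ gives the reverse inclusion, so $\sigma(X)=X$.

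Finally, we invoke the criterion to conclude. Say $X$ is defined by $\varphi(\bar x,\bar c)$ with $\bar c$ a finite tuple. Invariance under $Aut(M/A\cap B)$ means that every $\bar c'\models\tp(\bar c/A\cap B)$ defines the same set. By compactness and saturation, some formula $\psi\in\tp(\bar c/A\cap B)$ already forces $\varphi(M,\bar c')=\varphi(M,\bar c)$ for all $\bar c'$ satisfying $\psi$. Then $\exists \bar z\,(\psi(\bar z)\wedge\varphi(\bar x,\bar z))$ defines $X$ over $A\cap B$. I expect no real obstacle here: all the substance lies in Lemma~\ref{Intersect}, and the only point needing care is this routine Galois-correspondence step, which requires $M$ to be saturated and strongly homogeneous over the small set $A\cap B$.
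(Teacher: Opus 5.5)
Your proof is correct and is the intended derivation. The paper states this corollary without proof as an immediate consequence of Lemma~\ref{Intersect}: $X$ is invariant under $\langle Aut(M/A), Aut(M/B)\rangle$, so the lemma makes it invariant under $Aut(M/A\cap B)$, and the standard Galois criterion over the small set $A\cap B$ (which you spell out correctly via compactness and strong homogeneity) then yields definability over $A\cap B$.
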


The next step is to reduce the general case to the setting of Corollary \ref{cor:def-with-intersections}.

\begin{lemma}\label{make-it-remote}
Let $M\models Th(FG)$ be sufficiently saturated, let $M_1$ be a connected
component of $M$, and let $a',b'\in M_1$ be distinct elements. Then, for
every finite tuple $\bar u$ from $M_1$ and every $k\in\mathbb N$, there
exists
\[
g\in\langle Aut(M_1/a'), Aut(M_1/b')\rangle
\]
such that
\[
d(a',g(u_j))\geq k
\]
for every coordinate $u_j$ of $\bar u$.
\end{lemma}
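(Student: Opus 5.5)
The idea is to show that the subgroup $G:=\langle Aut(M_1/a'), Aut(M_1/b')\rangle$ contains the full pointwise stabiliser $Aut(M_1/c)$ of some vertex $c$ that is arbitrarily far from $a'$. Then a single element of $Aut(M_1/c)$ will push all of $\bar u$ beyond $c$. Throughout, distances are taken in the tree $G_{\mathrm{tree}}(M_1)$.

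\emph{Transferring automorphisms.} Every automorphism of $M$ fixing a point of $M_1$ preserves the component $M_1$, so it restricts to an automorphism of $M_1$. Let $c\in M_1$ and let $\bar v,\bar v'$ be tuples in $M_1$ with $\tp(\bar v/c)=\tp(\bar v'/c)$. Since $M$ is sufficiently saturated, hence strongly homogeneous, there is an automorphism of $M$ fixing $c$ and sending $\bar v$ to $\bar v'$. Its restriction lies in $Aut(M_1/c)$. We also need to produce such $\bar v'$ far from $a'$. Note that $\acl(c)=\{c\}$ by Fact~\ref{FGAlc}, so $\tp(\bar v/c)$ is stationary by Corollary~\ref{StationAlgClosed}. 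Using extension and saturation we realise $\bar v'\models\tp(\bar v/c)$ with $\indep{\bar v'}{c}{a'}$. Since $c$ shares a component with $\bar v$ and has the same type over $c$, the tuple $\bar v'$ stays in $M_1$. By Fact~\ref{FGFind}, every geodesic from a coordinate $v'_j$ to $a'$ passes through $c$. Hence
\[
d(a',v'_j)=d(a',c)+d(c,v'_j)\geq d(a',c).
\]

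\emph{Walking away from $a'$.} Put $c_0=a'$ and $c_1=b'$. Then $Aut(M_1/c_0)$ and $Aut(M_1/c_1)$ lie in $G$, and $d(c_0,c_1)\geq 2$. Inductively, suppose $Aut(M_1/c_{i-1})$ and $Aut(M_1/c_i)$ lie in $G$ and $c_{i-1}\neq c_i$. Apply the previous paragraph with $c=c_i$ and $\bar v=c_{i-1}$. This gives $h\in Aut(M_1/c_i)\subseteq G$ such that $c_{i+1}:=h(c_{i-1})$ satisfies
\[
d(a',c_{i+1})=d(a',c_i)+d(c_i,c_{i-1})\geq d(a',c_i)+2.
\]
Moreover $Aut(M_1/c_{i+1})=h\,Aut(M_1/c_{i-1})\,h^{-1}\subseteq G$, and $c_{i+1}\neq c_i$, so the induction continues. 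After enough steps we obtain $c:=c_N$ with $d(a',c)\geq k$ and $Aut(M_1/c)\subseteq G$.

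\emph{Final step.} Apply the transfer argument once more, with $c$ and $\bar v=\bar u$. This gives $g\in Aut(M_1/c)\subseteq G$ with $\indep{g(\bar u)}{c}{a'}$. Then
\[
d(a',g(u_j))\geq d(a',c)\geq k
\]
for every coordinate $u_j$ of $\bar u$.

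The step I expect to need the most care is the transfer between $Aut(M)$ and $Aut(M_1)$, together with the independence statement over a single point. Concretely, one must check that realisations of $\tp(\bar v/c)$ remain in $M_1$, and that independence over $c$, through Fact~\ref{FGFind} applied with $\acl(c)=\{c\}$, really forces geodesics to pass through $c$ so that distances add. If strong homogeneity is unavailable, I would instead build $h$ by hand. I would swap branches of $G_{\mathrm{tree}}(M_1)\setminus\{c\}$ exactly as in the proof of Corollary~\ref{StationAlgClosed}, using saturation to guarantee that $c$ lies in a fresh Farey graph whose branch is isomorphic over $c$ to the branch containing $a'$.
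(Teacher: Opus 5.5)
Your proof is correct, but it is organised differently from the paper's.

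\textbf{The paper's route.} It plays ping-pong directly with the tuple. It chooses $\rho\in Aut(M_1/a')$ with $\indep{\rho(\bar u)}{a'}{b'}$, then $\tau\in Aut(M_1/b')$ with $\indep{\tau\rho(\bar u)}{b'}{a'}$. Each round $\tau\circ\rho$ therefore adds exactly $2d(a',b')$ to the distance of every coordinate from $a'$, and the paper iterates this.

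\textbf{Your route.} You apply the same one-point push only to single vertices. The conjugation identity $h\,Aut(M_1/c_{i-1})\,h^{-1}=Aut(M_1/h(c_{i-1}))$ then shows that $\langle Aut(M_1/a'),Aut(M_1/b')\rangle$ contains the full stabilisers of vertices $c_i$ with $d(a',c_i)=i\,d(a',b')$. A single push by an element of $Aut(M_1/c_N)$ finishes the proof.

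\textbf{Comparison.} Both arguments rest on the same ingredients: extension of non-forking over a single point, strong homogeneity, and Fact~\ref{FGFind} with $Conv_{\mathrm{tree}}(\{c\})=\{c\}$ to make distances add. The paper's version is more economical, since it never leaves the tuple and needs no group-theoretic step. Yours gives the somewhat stronger structural fact that the group contains point stabilisers $Aut(M_1/c)$ for vertices $c$ arbitrarily far from $a'$. The ladder $(c_i)$ is also chosen independently of $\bar u$.

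\textbf{Two small remarks.} First, stationarity of $\tp(\bar v/c)$ is never used; extension alone suffices. Second, $\bar v'$ stays in $M_1$ because the first-order formulas $d(x,c)\le n$ belong to $\tp(\bar v/c)$. That is the precise content of your ``same type over $c$'' remark, since lying in the same component is not itself a single formula.
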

 
\begin{proof}
It is enough to prove that, given any finite tuple $\bar u$ from $M_1$,
there exists
\[
h\in\langle Aut(M_1/a'), Aut(M_1/b')\rangle
\]
satisfying $$d\big(a',h(u_j)\big)
=
d(a',u_j)+2d(a',b'), \text{ for every coordinate }u_j\text{ of }\bar u.$$
 
By the extension property of non-forking independence, there exists a
tuple $\bar u^{\,1}$ such that
\[
\tp(\bar u^{\,1}/a')=\tp(\bar u/a')
\quad\text{and}\quad
\indep{\bar u^{\,1}}{a'}{b'}.
\]
By saturation, there is an automorphism
$\rho\in Aut(M_1/a')$ sending $\bar u$ to $\bar u^{\,1}$.

Since $\bar u^{\,1}$ is independent from $b'$ over $a'$, every geodesic
from $u_j^{\,1}$ to $b'$ passes through $a'$. Hence
\[
d(b',u_j^{\,1})
=
d(b',a')+d(a',u_j^{\,1})
=
d(b',a')+d(a',u_j)
\qquad\text{for every }j,
\]
where the last equality follows from the fact that $\rho$ fixes $a'$.
 
Reapplying the same argument, there exists an automorphism
$\tau\in Aut(M_1/b')$ such that
$\bar u^{\,2}:=\tau(\bar u^{\,1})$ satisfies
\[
\indep{\bar u^{\,2}}{b'}{a'}.
\]
As $\tau$ fixes $b'$, we have
$d(b',u_j^{\,2})=d(b',u_j^{\,1})$ for every $j$. Moreover, by the
description of forking independence, every geodesic from $u_j^{\,2}$ to
$a'$ passes through $b'$. Therefore,
\[
d(a',u_j^{\,2})
=
d(a',b')+d(b',u_j^{\,2})
=
d(a',b')+d(b',u_j^{\,1})
=
2d(a',b')+d(a',u_j)
\]
for every $j$. Consequently,
\[
h:=\tau\circ\rho
\]
has the desired property.
\end{proof}

The following remark is a direct consequence of the quantifier
elimination provided by Fact~\ref{FGQE}. It collects two elementary
observations that play a fundamental role in the arguments that follow:
first, that the distance between any two entries of a tuple satisfying
an atomic $L'$-formula is uniformly bounded, and second, that every
$L'$-formula naturally decomposes according to the connected components
containing its parameters.

\begin{remark}\label{rem:components}
Every atomic $L'$-formula is uniformly local. More precisely, if
$\chi(z_1,\ldots,z_r)$ is an atomic $L'$-formula, where $r\leq 3$, then
there exists $N(\chi)<\omega$ such that for every model $M\models
Th(FG)$ and every tuple $c_1,\ldots,c_r\in M$,
\[
M\models\chi(c_1,\ldots,c_r)
\quad\Longrightarrow\quad
d(c_p,c_q)\leq N(\chi)
\]
for all $1\leq p,q\leq r$. Consequently, every tuple satisfying an
atomic $L'$-formula lies entirely within a single connected component of
$M$. In particular, if $a$ and $b$ belong to distinct connected
components, then
\[
M\models\neg P_\delta(a,b)
\]
and
\[
M\models\forall x\,\neg Y_\varepsilon(a,b,x).
\]

Now let $M_1$ be a connected component of $M$, let
$\bar a_1\in M_1^l$ and
$\bar a_2\in(M\setminus M_1)^m$, and let
$\theta(\bar x,\bar a_1,\bar a_2)$ be an $L'$-formula. By
Fact~\ref{FGQE}, $\theta$ is equivalent to a Boolean combination of
atomic $L'$-formulas. Rewriting this Boolean combination in
disjunctive normal form, we may therefore assume that $\theta$ is a
disjunction of conjunctions of atomic and negated atomic
$L'$-formulas. Since every atomic formula involving parameters from
both $\bar a_1$ and $\bar a_2$ is either identically false or the
negation of such a formula is identically true, after removing
inconsistent disjuncts and tautological literals, each remaining
disjunct can be written as the conjunction of a formula involving only
the parameters $\bar a_1$ and a formula involving only the parameters
$\bar a_2$. Hence $\theta(\bar x,\bar a_1,\bar a_2)$ is equivalent to a
formula of the form
\[
(R_1(\bar x,\bar a_1)\wedge R_1'(\bar x,\bar a_2))
\vee\cdots\vee
(R_n(\bar x,\bar a_1)\wedge R_n'(\bar x,\bar a_2)),
\]
where each $R_i$ and $R_i'$ is a conjunction of atomic and negated
atomic $L'$-formulas.
\end{remark}

\begin{lemma}\label{lemm:removing-disjoint-points}
Let $M\models Th(FG)$ be saturated, and let
$A=\acl(\bar a)$ and $B=\acl(\bar b)$ for some
tuples $\bar a,\bar b\in M$. Let $M_1$ be a connected component of $M$ such that $A\cap M_1=\{a'\}$ and $B\cap M_1=\{b'\}$. If $X\subseteq M^{l}$ is definable over $A$ and over $B$, then either $a'=b'$, or $X$ is definable over $A\setminus\{a'\}$ and over $B\setminus\{b'\}$.
\end{lemma}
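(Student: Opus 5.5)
Assume $a'\neq b'$; we show $X$ is definable over $A\setminus\{a'\}$. The claim for $B\setminus\{b'\}$ is symmetric. Write $A_0=A\setminus\{a'\}$ and $B_0=B\setminus\{b'\}$. Both lie in $M\setminus M_1$.

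\textbf{Step 1: split the defining formula by components.} Let $\theta(\bar x,a',\bar c)$ define $X$, where $\bar c\in A_0$. By Remark~\ref{rem:components}, $\theta$ is equivalent to a finite disjunction
\[
\bigvee_i\bigl(R_i(\bar x,a')\wedge R_i'(\bar x,\bar c)\bigr).
\]
Each atomic formula holding of a tuple lives inside a single connected component. So for a tuple $\bar x$ we may further split each $R_i(\bar x,a')$ according to which coordinates of $\bar x$ lie in $M_1$. The coordinates in $M_1$ interact with $a'$. The coordinates outside $M_1$ can only satisfy formulas with $a'$ that are trivially decided. Hence, up to finitely many cases, membership of $\bar x$ in $X$ is decided by three things: the partition of the coordinates into those in $M_1$ and those not; the atomic type of the $M_1$-part $\bar x_1$ over $a'$; and the type of the remaining part $\bar x_2$ over $\bar c$.

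\textbf{Step 2: invariance under both stabilisers, and locality.} $X$ is also definable over $B$, so it is $Aut(M/B)$-invariant. In particular it is invariant under automorphisms acting on $M_1$ fixing $b'$ and the identity elsewhere. Likewise it is invariant under those acting on $M_1$ fixing $a'$. Such automorphisms glue with the identity outside $M_1$, since $M_1$ is a connected component. So membership of $\bar x=(\bar x_1,\bar x_2)$ in $X$ is invariant under $g\in\langle Aut(M_1/a'),Aut(M_1/b')\rangle$ applied to $\bar x_1$.

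By Lemma~\ref{make-it-remote}, for any $k$ we can move $\bar x_1$ by such a $g$ so that every coordinate is at distance at least $k$ from $a'$. Choose $k$ larger than all the bounds $N(\chi)$ from Remark~\ref{rem:components} for the finitely many atomic formulas occurring in $\theta$. Then every atomic formula linking a coordinate of $g(\bar x_1)$ with $a'$ fails. The atomic formulas among the coordinates of $\bar x_1$ themselves are preserved by $g$.

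\textbf{Step 3: remove $a'$ from the formula.} Replace each atomic formula in $\theta$ that mentions $a'$ together with some $x$-variable by $\bot$. This is legitimate because, after moving remote, such formulas are false. Atomic formulas mentioning only $a'$ are sentences, decided outright. Call the result $\theta'(\bar x,\bar c)$; it is a formula over $A_0$.

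For any $\bar x$ pick $g$ as in Step 2. Then
\[
\bar x\in X\iff g\bar x\in X\iff M\models\theta(g\bar x,a',\bar c)\iff M\models\theta'(g\bar x,\bar c)\iff M\models\theta'(\bar x,\bar c).
\]
The last equivalence holds because $g$ is an automorphism fixing $\bar c$, which lies outside $M_1$. Therefore $\theta'$ defines $X$ over $A\setminus\{a'\}$.

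\textbf{Main obstacle.} The delicate point is Step 2's application of Lemma~\ref{make-it-remote}. That lemma is stated for automorphisms of $M_1$, so we must check two things. First, these automorphisms extend by the identity to automorphisms of $M$ fixing $A$, respectively $B$. This holds because $A\cap M_1=\{a'\}$ and $B\cap M_1=\{b'\}$, and the structure splits along components. Second, the move handles several coordinates of $\bar x$ in $M_1$ simultaneously. This is exactly what the lemma provides, since it treats an arbitrary finite tuple $\bar u$.
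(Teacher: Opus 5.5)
Your proposal is correct and follows the paper's proof. Both arguments use Lemma~\ref{make-it-remote}, extended by the identity outside $M_1$, to move an arbitrary tuple into a remote one via $\langle Aut(M/a'),Aut(M/b')\rangle$. Both then note that every atomic formula linking $a'$ to an $x$-variable fails on remote tuples, and conclude by the invariance of $X$ together with the fact that these automorphisms fix the remaining parameters.

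Replacing those atomic formulas by $\bot$ is equivalent to the paper's step of keeping only the disjuncts in $I_-$ and deleting their $a'$-literals. Your separate handling of atomic formulas mentioning only $a'$ also tidies a small point the paper passes over.
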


\begin{proof}
We assume that $a'\neq b'$. Choose a finite tuple
$\bar\alpha$ from $A\setminus\{a'\}$ and an $L'$-formula
$\theta$ such that
\[
X=\theta(M,\bar\alpha,a').
\]
By Remark~\ref{rem:components},
\[
\theta(\bar x,\bar\alpha,a')
\equiv
\bigvee_{i=1}^{n}
\Bigl(
R_i(\bar x,\bar\alpha)
\wedge
R_i'(\bar x,a')
\wedge
\psi_i(\bar x)
\Bigr),
\]
where each literal of $R_i$ contains a parameter from $\bar\alpha$ but
not $a'$, each literal of $R_i'$ contains the parameter $a'$ but no
parameter from $\bar\alpha$, and $\psi_i$ is quantifier-free without
parameters.

Let $N$ be the maximum of the constants $N(\chi)$ from
Remark~\ref{rem:components}, where $\chi$ ranges over the finitely many
atomic formulas occurring in $\theta$. We call a tuple
$\bar c\in M^l$ \emph{remote} if, for every coordinate $c_j$,
\[
c_j\notin M_1
\qquad\text{or}\qquad
d(a',c_j)>N.
\]

Observe that every tuple can be moved to a remote one by an automorphism
generated by $Aut(M/a')$ and $Aut(M/b')$. Indeed, if no coordinate of
$\bar c$ belongs to $M_1$, there is nothing to prove. Otherwise,
Lemma~\ref{make-it-remote}, applied with $k=N+1$ to the subtuple of
$\bar c$ contained in $M_1$, yields an automorphism
\[
\sigma\in
\langle
Aut(M_1/a'),
Aut(M_1/b')
\rangle
\]
such that $\sigma(\bar c)$ is remote. Extending $\sigma$ by the identity
outside $M_1$, we may regard it as an element of
$\langle Aut(M/a'), Aut(M/b')\rangle$.

We now compare $X$ with the formula obtained by deleting all literals
involving $a'$ from those disjuncts in which $R_i'$ contains no positive
atomic formulas. More precisely, let
\[
I_-=
\{
\,i\le n:
R_i'
\text{ consists entirely of negated atomic formulas}
\},
\]
and define
\[
\varphi(\bar x,\bar\alpha)
:=
\bigvee_{i\in I_-}
\bigl(
R_i(\bar x,\bar\alpha)
\wedge
\psi_i(\bar x)
\bigr).
\]

We claim that $X$ and $\varphi(M,\bar\alpha)$ agree on the class of
remote tuples.

Indeed, let $\bar c\in X$ be remote. If
$R_i'(\bar c,a')$ holds, then $i\in I_-$, since any positive atomic
formula in $R_i'$ would force one of the coordinates of $\bar c$ to lie
within distance at most $N$ from $a'$, contradicting remoteness.
Therefore $\varphi(\bar c,\bar\alpha)$ holds.

Conversely, suppose that
$M\models\varphi(\bar c,\bar\alpha)$, witnessed by some
$i\in I_-$. Since every literal in $R_i'(\bar x,a')$ is the negation of
an atomic formula, there is $N_i<\omega$ such that
$R_i'(\bar c,a')$ holds whenever
$d(a',c_j)>N_i$ for every coordinate $c_j\in M_1$. As $\bar c$ is
remote, this condition is satisfied. Hence
$R_i'(\bar c,a')$ holds, and therefore
$\theta(\bar c,\bar\alpha,a')$ holds as well. Thus
$\bar c\in X$.

It follows that $X$ and $\varphi(M,\bar\alpha)$ coincide on remote
tuples.

Finally, let $\bar c\in M^l$. Choose
\[
\sigma\in
\langle
Aut(M/a'),
Aut(M/b')
\rangle
\]
such that $\sigma(\bar c)$ is remote. Since $X$ is definable over both
$A$ and $B$, it is invariant under $\sigma$. Moreover,
$\sigma(\bar\alpha)=\bar\alpha$. Therefore,
\[
\bar c\in X
\iff
\sigma(\bar c)\in X
\iff
M\models\varphi(\sigma(\bar c),\bar\alpha)
\iff
M\models\varphi(\bar c,\bar\alpha),
\]
where the middle equivalence uses the previous claim. Hence
$X=\varphi(M,\bar\alpha)$, so $X$ is definable over
$A\setminus\{a'\}$. By symmetry, it is also definable over
$B\setminus\{b'\}$.
\end{proof}

\begin{lemma}\label{lmm:every-conn-comp}
Let $M\models Th(FG)$ be sufficiently saturated, let
$A=\acl(\bar a)$ and $B=\acl(\bar b)$ for some tuples
$\bar a,\bar b\in M$, and let $X\subseteq M^n$ be definable over both
$A$ and $B$. Suppose that $M_1$ is a connected component of $M$
meeting $A$ but disjoint from $B$. Then $X$ is definable over
$A\setminus M_1$.
\end{lemma}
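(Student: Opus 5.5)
The plan is to mimic the proof of Lemma~\ref{lemm:removing-disjoint-points}, but with a stronger source of automorphisms. Since $B\cap M_1=\emptyset$, every automorphism of $M_1$, extended by the identity outside $M_1$, lies in $Aut(M/B)$. Such automorphisms therefore preserve $X$.

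Write $A\cap M_1=A_1$ and choose a finite tuple $\bar\alpha_1$ from $A_1$ and a finite tuple $\bar\alpha_2$ from $A\setminus M_1$ with $X=\theta(M,\bar\alpha_1,\bar\alpha_2)$. By Remark~\ref{rem:components}, $\theta$ is equivalent to a disjunction
\[
\bigvee_{i=1}^{n}\bigl(R_i(\bar x,\bar\alpha_2)\wedge R_i'(\bar x,\bar\alpha_1)\wedge\psi_i(\bar x)\bigr),
\]
where the literals of $R_i'$ mention parameters from $\bar\alpha_1$, those of $R_i$ mention only $\bar\alpha_2$, and $\psi_i$ is parameter-free. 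Let $N$ bound the locality constants $N(\chi)$ of the atomic formulas occurring. Call $\bar c$ \emph{remote} if every coordinate $c_j$ either lies outside $M_1$ or satisfies $d(\alpha,c_j)>N$ for every entry $\alpha$ of $\bar\alpha_1$.

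First I show that every tuple can be moved to a remote one by an element of $Aut(M/B)$ that fixes $\bar\alpha_2$. This is easier than Lemma~\ref{make-it-remote}, because I do not need to fix anything inside $M_1$. I use saturation and homogeneity of $M_1$ (a copy of $FG(\mathcal G)$ with $\mathcal G$ connected): given the finite subtuple $\bar u$ of $\bar c$ in $M_1$, I pick $\bar u'\equiv\bar u$ in $M_1$ with every coordinate at distance $>N$ from $\bar\alpha_1$. Such a tuple exists, for instance by translating $\bar u$ far away inside the tree while preserving its type. Alternatively, I apply Lemma~\ref{make-it-remote} with two distinct auxiliary points; this only requires some automorphism of $M_1$, so no fixed points are needed. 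I then extend by the identity on $M\setminus M_1$. The result fixes $B$ and $\bar\alpha_2$, and hence preserves both $X$ and the formula below.

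Next I define
\[
\varphi(\bar x,\bar\alpha_2)=\bigvee_{i\in I_-}\bigl(R_i(\bar x,\bar\alpha_2)\wedge\psi_i(\bar x)\bigr),
\]
where $I_-$ is the set of indices $i$ for which $R_i'$ consists only of negated atomic formulas. I then argue as in Lemma~\ref{lemm:removing-disjoint-points} that $X$ and $\varphi(M,\bar\alpha_2)$ agree on remote tuples. Any positive atomic literal with a parameter in $\bar\alpha_1$ forces some variable in the same component within distance $N$ of that parameter. If no variable lies in $M_1$, the literal fails outright, since atomic formulas are component-local. Conversely, the negated literals hold automatically on remote tuples. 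Invariance of both sets under the moving automorphisms then gives $X=\varphi(M,\bar\alpha_2)$, so $X$ is definable over $A\setminus M_1$.

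The main obstacle is the remoteness step. I need to check that the automorphism can be chosen inside $M_1$ alone, so that it fixes $B$ and $A\setminus M_1$. I also need to handle the case where $A_1$ is a single point, when atomic formulas containing that point only are trivial. Finally, I must make sure that the negated literals in $R_i'$ really hold at large distance, using the locality in Remark~\ref{rem:components}.
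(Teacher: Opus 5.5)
Your proof is correct, but it takes a different route from the paper's.

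\textbf{The paper's route.} It never looks at formulas in this lemma. It shows that $X$ is invariant under $Aut(M/A\setminus M_1)$ by writing each such $\sigma$ as $\delta_1^{-1}\delta_2$. Here $\delta_1\in Aut(M/B)$ undoes $\sigma$ on $A_1$ while fixing every component meeting $B\cup(A\setminus M_1)$, and $\delta_2\in Aut(M/A)$. When $\sigma(M_1)$ meets $B$, it first composes with some $\sigma'\in Aut(M/A)$ that pushes $\sigma(M_1)$ to a component disjoint from $A\cup B$. Definability over $A\setminus M_1$ then follows from the usual Galois criterion.

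\textbf{Your route.} You reuse the quantifier-elimination and locality mechanism of Lemma~\ref{lemm:removing-disjoint-points}. The simplification is that $B\cap M_1=\emptyset$, so every automorphism of $M_1$, extended by the identity, already preserves $X$; hence nothing inside $M_1$ needs to be fixed.

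\textbf{What each buys.} Your argument produces an explicit defining formula over a finite subtuple $\bar\alpha_2$ of $A\setminus M_1$. It avoids the component-swapping and the case split on whether $\sigma(M_1)$ meets $B$. It also avoids invoking the Galois criterion over the infinite set $A\setminus M_1$. The paper's argument, in turn, does not depend on Fact~\ref{FGQE} or Remark~\ref{rem:components}; it uses only how automorphisms patch across components.

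\textbf{Two routine points to write out.}
\begin{enumerate}
\item Discard literals that contain parameters but no variables (e.g.\ an edge between two entries of $\bar\alpha_1$) as tautological or inconsistent before defining $I_-$. Otherwise a true positive literal of this kind breaks the step ``$R_i'(\bar c,\bar\alpha_1)$ holds $\Rightarrow i\in I_-$''.
\item Lemma~\ref{make-it-remote} only controls the distance to the single point $a'$. So take $k>N+\max_{\alpha\in\bar\alpha_1}d(a',\alpha)$ and use the triangle inequality to obtain remoteness from every entry of $\bar\alpha_1$. This also replaces your vaguer ``translating $\bar u$ far away'' option.
\end{enumerate}
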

\begin{proof}

Put
\[
A_1=A\cap M_1
\qquad\text{and}\qquad
C=A\setminus M_1,
\]
so that
\[
A=A_1\sqcup C.
\]

It suffices to show that $X$ is invariant under
$Aut(M/C)$. Accordingly, let
$\sigma\in Aut(M/C)$. If
$\sigma\!\upharpoonright_{A_1}=\mathrm{id}_{A_1}$, then
$\sigma$ fixes $A$ pointwise, and hence
$\sigma(X)=X$. We may therefore assume that
$\sigma$ moves some point of $A_1$.

Since automorphisms preserve connected components, $\sigma(M_1)$ is
again a connected component of $M$. We distinguish two cases according
to whether $\sigma(M_1)$ meets $B$.

Suppose first that $\sigma(M_1)\cap B=\emptyset$. Since both $M_1$ and $\sigma(M_1)$ are disjoint from $B\cup C$, by
saturation and homogeneity there is an automorphism
\[
\delta_1\in Aut(M/B)
\]
which fixes every connected component meeting $B\cup C$ pointwise and
satisfies
\[
\delta_1\!\upharpoonright_{\sigma(A_1)}
=
(\sigma\!\upharpoonright_{A_1})^{-1}.
\]
Setting
\[
\delta_2=\delta_1\circ\sigma,
\]
we obtain
\[
\delta_2\in Aut(M/A).
\]
Since $X$ is definable over both $A$ and $B$, we have
\[
\delta_2(X)=X
\qquad\text{and}\qquad
\delta_1(X)=X.
\]
Therefore
\[
\sigma(X)
=
\delta_1^{-1}\delta_2(X)
=
X.
\]

It remains to consider the case
$\sigma(M_1)\cap B\neq\emptyset$. Put $M_2=\sigma(M_1)$. Since
$M_1\cap B=\emptyset$ whereas $M_2\cap B\neq\emptyset$, the components
$M_1$ and $M_2$ are distinct, and hence disjoint.

We first observe that $M_2$ is also disjoint from $A$. Indeed,
$A_1\subseteq M_1$, so $A_1\cap M_2=\emptyset$. On the other hand,
$\sigma$ fixes $C=A\setminus M_1$ pointwise. If some $c\in C$ belonged
to $M_2=\sigma(M_1)$, then $\sigma^{-1}(c)\in M_1$; but
$\sigma^{-1}(c)=c$, contradicting $c\notin M_1$. Thus
$C\cap M_2=\emptyset$, and consequently
\[
A\cap M_2=\emptyset.
\]

Since $A=\acl(\bar a)$ and $B=\acl(\bar b)$ are generated by finite
tuples, they meet only finitely many connected components of $M$.
Therefore, by saturation and homogeneity, there exist a connected
component $M_3$, disjoint from $A\cup B$, and an automorphism
$\sigma'\in Aut(M/A)$ satisfying
\[
\sigma'(M_2)=M_3.
\]

Now $\sigma'\circ\sigma$ fixes $C$ pointwise and
\[
(\sigma'\circ\sigma)(M_1)=M_3,
\]
which is disjoint from $B$. Therefore the first case applies to
$\sigma'\circ\sigma$, and we obtain
\[
\sigma'\circ\sigma(X)=X.
\]
Moreover, $\sigma'(X)=X$, since $\sigma'\in Aut(M/A)$ and $X$ is
$A$-definable. Consequently,
\[
\sigma(X)
=
\sigma'^{-1}\circ\bigl(\sigma'\circ\sigma(X)\bigr)
=
X.
\]

Thus every automorphism fixing $C=A\setminus M_1$ pointwise preserves
$X$ setwise. Hence $X$ is definable over $A\setminus M_1$, as required.

\end{proof}

For the proof of the next result, we shall refer to the unique point of
a non-empty convex subset $A$ of a tree lying on every geodesic joining
a point of $A$ to a point of a disjoint convex subset $B$ as the
\emph{gate of $A$ with respect to $B$}.

\begin{proposition}\label{IntersectionAlg}
Let $M\models Th(FG)$ be sufficiently saturated, and let
$A=\acl(\bar a)$ and $B=\acl(\bar b)$ for some tuples
$\bar a,\bar b\in M$. Every subset of $M^n$ that is definable over both
$A$ and $B$ is definable over $A\cap B$.
\end{proposition}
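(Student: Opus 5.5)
We reduce to the situation of Corollary~\ref{cor:def-with-intersections} by shrinking $A$ and $B$ one connected component at a time. We keep the invariant that $X$ is definable over both sets, that both are algebraic closures of finite tuples, and that their intersection is unchanged. Since $\bar a,\bar b$ are finite, $A$ and $B$ meet only finitely many connected components of $M$, so the procedure terminates. We use Lemma~\ref{lmm:every-conn-comp} and Lemma~\ref{lemm:removing-disjoint-points}, together with a gate argument for components where the traces of $A$ and $B$ are larger than points.

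We classify the components $M_1$ meeting $A\cup B$ as follows.

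\emph{Case (i): $M_1$ meets $A$ but not $B$, or vice versa.} Lemma~\ref{lmm:every-conn-comp} lets us replace $A$ by $A\setminus M_1$. This set is again $\acl$ of a finite tuple, namely the coordinates of $\bar a$ outside $M_1$, by the componentwise description in Fact~\ref{FGAlc}. The intersection with $B$ does not change.

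\emph{Case (ii): $M_1$ meets both, and $A\cap B\cap M_1\neq\emptyset$.} This component already satisfies the hypothesis of Corollary~\ref{cor:def-with-intersections}, and we leave it alone.

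\emph{Case (iii): $M_1$ meets both, and $A\cap M_1$, $B\cap M_1$ are disjoint.} Here $A_1=A\cap M_1$ and $B_1=B\cap M_1$ are disjoint convex pieces of the tree component. Let $a'$ be the gate of $A_1$ with respect to $B_1$, and $b'$ the gate of $B_1$ with respect to $A_1$. These are element-type points, possibly after replacing a Farey-vertex gate by the element of it lying toward the other set. The plan is to generalize Lemma~\ref{lemm:removing-disjoint-points}. We write $X=\theta(M,\bar\alpha,\bar a_1)$ with $\bar a_1$ a tuple from $A_1$. Every tuple from $M_1$ can be pushed far from $A_1$ by $\langle Aut(M/A),Aut(M/B)\rangle$, by running the proof of Lemma~\ref{make-it-remote} with the whole of $A_1$ and $B_1$ in place of $a'$ and $b'$. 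Independence over $A_1$ from $B_1$ forces every geodesic to pass through $a'$, and the distance to $A_1$ grows by $2d(a',b')$ at each round. On remote tuples, the literals with parameters from $\bar a_1$ that survive are only negated ones. These hold automatically by the locality in Remark~\ref{rem:components}. So $X$ is definable over $A\setminus M_1$, and symmetrically over $B\setminus M_1$. After this step $M_1$ falls under Case (i) or disappears. A simpler route would be to note that the argument of Lemma~\ref{lemm:removing-disjoint-points} only used that $a'$ separates $M_1$ from $b'$, so it applies verbatim with $A_1$ convex. Either way the intersection is unchanged, since $A_1\cap B_1=\emptyset$.

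Iterating finitely many times yields $A'\subseteq A$ and $B'\subseteq B$ with $A'\cap B'=A\cap B$. Every component meeting $A'$ or $B'$ contains a point of $A'\cap B'$, and $X$ is definable over both. Corollary~\ref{cor:def-with-intersections} then gives definability over $A\cap B$.

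The main obstacle is Case (iii) when $A_1$ or $B_1$ is a union of Farey graphs rather than a single point. The remoteness step must move tuples away from all of $A_1$, not just $a'$, while fixing $A_1$ and $B_1$ pointwise. We check this using Corollary~\ref{StationAlgClosed} for the existence of the required automorphisms over the algebraically closed sets $A_1$ and $B_1$. We must also check that the resulting sets remain algebraic closures of finite tuples, as Corollary~\ref{cor:def-with-intersections} requires.
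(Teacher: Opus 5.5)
Your proof is correct, but in case (iii) it takes a different route from the paper.

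\textbf{The paper's route.} It never deletes a component with disjoint traces $A_i=A\cap M_i$, $B_i=B\cap M_i$ directly. Instead it enlarges $A$ by the gate $b_i$ of $B_i$ (and, separately, $B$ by the gate $a_i$ of $A_i$), and checks that $\acl(A_i\cup\{b_i\})\cap B_i=\{b_i\}$, so that Corollary~\ref{cor:def-with-intersections} applies. This gives definability over $(A\cap B)\cup\{b_1,\dots,b_k\}$ and over $(A\cap B)\cup\{a_1,\dots,a_k\}$. Only then does it use the single-point Lemma~\ref{lemm:removing-disjoint-points} to discard the gates.

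\textbf{Your route.} You prove multi-parameter versions of Lemmas~\ref{make-it-remote} and~\ref{lemm:removing-disjoint-points} outright, and this works. After one round of alternating non-forking extensions over $A_1$ and then over $B_1$, every geodesic from $u_j^{2}$ to a point $x\in A_1$ passes through $b'$ and then $a'$. Hence $d(u_j^{2},x)\geq d(u_j,a')+2d(a',b')$. On tuples remote from all of $\bar a_1$, the positive literals of $R_i'$ fail and the negated ones hold, by the locality in Remark~\ref{rem:components}, exactly as in the single-point case.

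\textbf{Comparison.} In your route $A$ and $B$ only ever shrink by whole components. So they stay algebraic closures of finite tuples for free by Fact~\ref{FGAlc}, the gate-intersection computation disappears, and Corollary~\ref{cor:def-with-intersections} is used once at the end. The paper's route, in exchange, gets to use the already-proved single-point lemmas as black boxes.

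\textbf{Minor remarks.} First, take the pushing automorphisms explicitly in $\langle Aut(M_1/A_1),Aut(M_1/B_1)\rangle$, extended by the identity off $M_1$, rather than in $\langle Aut(M/A),Aut(M/B)\rangle$. This guarantees they fix the parameters $\bar\alpha$ outside $M_1$. Second, neither Corollary~\ref{StationAlgClosed} nor any adjustment of the gates is needed. Homogeneity alone supplies the automorphisms over $A_1$ and $B_1$, and the gate of a trace that is a union of whole Farey graphs is automatically an element vertex.
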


\begin{proof}
Let $X\subseteq M^n$ be definable over both $A$ and $B$. By
Lemma~\ref{lmm:every-conn-comp}, we may first discard every connected
component meeting exactly one of $A$ and $B$. Thus, we may assume that
every connected component of $M$ either meets both $A$ and $B$, or is
disjoint from both.

If every connected component meeting $A$ and $B$ also contains a point
of $A\cap B$, then the conclusion follows immediately from
Corollary~\ref{cor:def-with-intersections}. We may therefore assume that
there are connected components meeting both $A$ and $B$ but not
$A\cap B$.

Let $M_1,\ldots,M_k$ be precisely these components, and put
\[
A_i=A\cap M_i
\qquad\text{and}\qquad
B_i=B\cap M_i.
\]
Let $a_i$ be the gate of $A_i$ with respect to $B_i$, and let $b_i$ be
the gate of $B_i$ with respect to $A_i$. Since
$A_i\cap B_i=\emptyset$, we have $a_i\neq b_i$.

We enlarge $A$ and $B$ by adjoining the corresponding gates. For each
$i$, put
\[
A_i'=\acl(A_i\cup\{b_i\})
\qquad\text{and}\qquad
B_i'=\acl(B_i\cup\{a_i\}),
\]
and define
\[
A'
=
A\cup\bigcup_{i=1}^k A_i',
\qquad
B'
=
B\cup\bigcup_{i=1}^k B_i'.
\]

We first observe that
\[
A_i'\cap B_i=\{b_i\}.
\]
Indeed, by Fact~\ref{FGAlc}, every point of
$Conv_{\mathrm{tree}}(A_i')$ lies on a geodesic joining $b_i$ to a point
of $Conv_{\mathrm{tree}}(A_i)$. Since $b_i$ is the gate of $B_i$ with
respect to $A_i$, no such geodesic meets $B_i$ except at $b_i$. Hence
$A_i'\cap B_i=\{b_i\}$. By symmetry,
\[
A_i\cap B_i'=\{a_i\}.
\]

It follows that
\[
A'\cap B=(A\cap B)\cup\{b_1,\ldots,b_k\}
\]
and, similarly,
\[
A\cap B'=(A\cap B)\cup\{a_1,\ldots,a_k\}.
\]

Since $X$ is definable over $A$, it is also definable over $A'$, while
its definability over $B$ is unchanged. Moreover, every connected
component meeting both $A'$ and $B$ now contains a point of
$A'\cap B$: in each of the components $M_i$ this point is $b_i$, while
all the remaining components already contain a point of $A\cap B$.
Therefore, by Corollary~\ref{cor:def-with-intersections}, $X$ is
definable over
\[
(A\cap B)\cup\{b_1,\ldots,b_k\}.
\]

Applying the same argument to $A$ and $B'$, we obtain that $X$ is also
definable over
\[
(A\cap B)\cup\{a_1,\ldots,a_k\}.
\]

Finally, Lemma~\ref{lemm:removing-disjoint-points} can be applied
successively in the components $M_1,\ldots,M_k$. Since
$a_i\neq b_i$ for every $i$, it removes the two additional parameters
$a_i$ and $b_i$ one component at a time. Consequently, $X$ is definable
over $A\cap B$, as required.
\end{proof}

The following result is an immediate consequence of Propositions \ref{DCCAlg} and \ref{IntersectionAlg}.

\begin{theorem}\label{WEI}
The first-order theory of the Farey graph weakly eliminates imaginaries.
\end{theorem}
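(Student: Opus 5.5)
The plan is to verify the two hypotheses of the Poizat-style lemma stated in the preliminaries (the lemma from \cite[Lemma 16.17]{PoizatBookModelTheory}) and then invoke it. Throughout I work in a sufficiently saturated (indeed saturated, or at least $\aleph_1$-saturated and strongly homogeneous) model $\mathbb M$ of $Th(FG)$. Weak elimination of imaginaries is a property of the theory, and every imaginary $e$ is the code of some $\emptyset$-definable equivalence class. So it suffices to show that every definable set $X\subseteq\mathbb M^n$ has a smallest algebraically closed set, generated by a finite set, over which it is definable. The equivalence of this with weak elimination is the Fact recorded in the preliminaries.

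The first hypothesis, that there is no strictly descending chain $\acl(A_0)\supsetneq\acl(A_1)\supsetneq\cdots$ with each $A_i$ finite, is exactly Proposition~\ref{DCCAlg}. So, starting from any finite tuple $\bar c$ over which $X$ is definable, I would descend through algebraic closures of finite sets over which $X$ remains definable. By the descending chain condition this process terminates, yielding a minimal $A=\acl(\bar a)$ over which $X$ is definable.

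The second hypothesis is supplied by Proposition~\ref{IntersectionAlg}: if $X$ is definable over both $A=\acl(\bar a)$ and $B=\acl(\bar b)$, then $X$ is definable over $A\cap B$. Given two minimal such sets $A$ and $B$, definability over $A\cap B$ forces $A\cap B=A=B$ by minimality. The one point to check here is that $A\cap B$ is again the algebraic closure of a finite set, so that it lies in the class over which minimality was taken. By Fact~\ref{FGAlc}, $A\cap B$ is a finite union of Farey copies lying in convex hulls and of singletons. Within each component, the intersection of two convex subtrees is convex and is spanned by finitely many points (for instance, one vertex of each extremal Farey copy together with the gates). So $A\cap B=\acl$ of a finite set. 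Hence the minimal set is the smallest one.

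Finally, the lemma yields a finite tuple $\bar a$ with $e\in\dcl^{eq}(\bar a)$, since $X$ is $\bar a$-definable. It also yields $\bar a\in\acl^{eq}(e)$: every automorphism fixing $e$ fixes $X$ setwise, hence permutes the smallest algebraically closed set $\acl(\bar a)$. Because $\acl(\bar a)$ is determined by finitely many Farey copies and points, it has only boundedly many conjugates over $e$, so $\bar a$ has finitely many conjugates over $e$. I expect the main (modest) obstacle to be this last finiteness bookkeeping, namely the convexity and finite-generation check for $A\cap B$. Everything else is a direct assembly of Propositions~\ref{DCCAlg} and \ref{IntersectionAlg}.
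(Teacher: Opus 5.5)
Your proposal is correct and follows the paper's route: the paper also obtains Theorem~\ref{WEI} as an immediate application of the Poizat-style lemma, with Proposition~\ref{DCCAlg} giving the descending chain condition and Proposition~\ref{IntersectionAlg} giving definability over $A\cap B$. Your check that $A\cap B$ is finitely generated is true but unnecessary, since you can take a finite $C_0\subseteq A\cap B$ over which $X$ is definable and use $\acl(C_0)\subseteq A\cap B$; your final paragraph is likewise redundant, because the lemma as stated already concludes weak elimination of imaginaries.
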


We are now ready to prove the main result of this paper. 

\begin{theorem}
$Th(FG)$ is 1-ample but not 2-ample.
\end{theorem}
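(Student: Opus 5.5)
The proof has two parts. First we show $Th(FG)$ is $1$-ample by exhibiting $a_0,a_1$ with $a_0$ forking with $a_1$ over $\emptyset$ and $\acl^{eq}(a_0)\cap\acl^{eq}(a_1)=\acl^{eq}(\emptyset)$. Then we show it is not $2$-ample.

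For $1$-ampleness, take $a_0,a_1$ to be two adjacent vertices in one copy of the Farey graph. They lie in the same connected component, so by Fact~\ref{FGFind} they fork over $\emptyset$. For the intersection condition we use Theorem~\ref{WEI}. Any $e\in\acl^{eq}(a_0)\cap\acl^{eq}(a_1)$ is interalgebraic with a real tuple $\bar c$, so $\bar c\subseteq\acl(a_0)\cap\acl(a_1)=\{a_0\}\cap\{a_1\}=\emptyset$ by Fact~\ref{FGAlc}. Hence $e\in\acl^{eq}(\emptyset)$.

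For non-$2$-ampleness, suppose $a_0,a_1,a_2$ (possibly over parameters) satisfy conditions (1)--(4) of Definition~\ref{Ample} with $n=2$. The first step is to reduce to real algebraically closed sets. By weak elimination of imaginaries, $\acl^{eq}(X)$ is determined by the real set $\acl(X)$: an imaginary in $\acl^{eq}(X)$ is interalgebraic with a real tuple lying in $\acl(X)$. Consequently, an intersection of $\acl^{eq}$-sets equals $\acl^{eq}(\emptyset)$, respectively $\acl^{eq}(a_0)$, exactly when the corresponding real intersection is contained in $\acl(\emptyset)$, respectively $\acl(a_0)$. Work over the added parameters, enlarged to be algebraically closed, and set
\[
A_0=\acl(a_0),\quad A_1=\acl(a_0a_1),\quad A_2=\acl(a_0a_2).
\]
Condition (4) becomes $A_1\cap A_2=A_0$. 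Condition (2) says $a_2$ is independent from $a_0$ over $a_1$. I expect the cleanest route is the standard characterization of CM-triviality: for real algebraically closed $A\subseteq B$ and any $c$, if $\acl(cA)\cap B=A$ then $\acl(c A)$ and $B$ are independent over $A$. Showing this implies non-$2$-ampleness is the usual Pillay argument, carried out with $A=\acl(a_1)$, $B=\acl(a_0a_1)$, $c=a_2$.

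The core step is therefore to verify this characterization geometrically, using Fact~\ref{FGFind} and the tree structure of $G_{\mathrm{tree}}$. Put $C=\acl(cA)$. It is the union of the Farey graphs in the convex hull of $\{c\}\cup A$, so it contains $A$ and a geodesic "spine" from $c$ to the gate $g$ of $A$ with respect to $c$. Suppose $C$ forks with $B$ over $A$. Then some geodesic from a point of $C$ to a point of $B$ avoids $Conv_{\mathrm{tree}}(A)$. Since both $C$ and $B$ are convex sets containing $A$ (in the tree sense), this geodesic must leave $Conv_{\mathrm{tree}}(A)$ through a common branch at $g$. The branch toward $c$ then meets $Conv_{\mathrm{tree}}(B)$ in a vertex outside $A$: either a Farey-graph vertex or an element vertex. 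Either way, we find a point of $M$ in $C\cap B\setminus A$, contradicting $C\cap B=A$. The degenerate cases, where $A$ is empty, a singleton, or different connected components are involved, are handled componentwise with Fact~\ref{FGAlc}, exactly as in Corollary~\ref{IndAlgClosed}.

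I expect the main obstacle to be that branching argument. One has to check carefully that, when a Farey-graph vertex $F$ adjacent to $g$ lies in both convex hulls, $F$ really contributes a real point outside $A$ to both $C$ and $B$. This holds because $F$ is infinite and $A$ contains only $g$ from $F$, since $F\notin Conv_{\mathrm{tree}}(A)$. The same check is needed when $A$ is a single point and $\acl$ does not add Farey graphs.
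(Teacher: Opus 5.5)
Your $1$-ampleness argument is correct and is the paper's. The non-$2$-ampleness half, however, rests on a lemma that is false for this theory. The principle you invoke is: for algebraically closed $A\subseteq B$ and any $c$, $\acl(cA)\cap B=A$ implies that $\acl(cA)$ is independent from $B$ over $A$. This is not CM-triviality; Hrushovski's condition only concludes $\mathrm{Cb}(c/A)\subseteq\acl^{eq}(\mathrm{Cb}(c/B))$. Applied with $A=\acl(c)\cap B$, your principle says that $\acl(c)$ is independent from $B$ over $\acl(c)\cap B$ for all $c$ and all algebraically closed $B$. That is $1$-basedness, equivalently non-$1$-ampleness, so it contradicts the first half of your own proof. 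Indeed your $1$-ampleness witness refutes it directly. With $A=\emptyset$, $B=\{a_1\}$, $c=a_0$ one has $\acl(cA)\cap B=\emptyset=A$, yet $a_0$ forks with $a_1$ over $\emptyset$. The same happens over any base lying in other components. Your geometric sketch breaks exactly here: when $Conv_{\mathrm{tree}}(A)$ misses the component containing the offending geodesic, there is no gate $g$ to branch at. Corollary~\ref{IndAlgClosed} cannot be invoked for this ``degenerate case'', because its hypothesis is precisely that every component meeting both sets contains a point of their intersection. The case you set aside is the whole content of non-$1$-basedness here.

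There is also a slip in the instantiation. With $A=\acl(a_1)$, $B=\acl(a_0a_1)$, $c=a_2$, both the hypothesis $\acl(a_1a_2)\cap\acl(a_0a_1)=\acl(a_1)$ and the conclusion already follow from $\indep{a_2}{a_1}{a_0}$, so no contradiction can come from them. Pillay's argument uses $A=\acl(a_0)$, $B=\acl(a_0a_1)$, $c=a_2$, with $\acl(a_0a_1)\cap\acl(a_0a_2)=\acl(a_0)$ supplying the hypothesis. Even then, what is missing is a way to localize. The forking of $a_0$ with $a_2$ over the base yields a geodesic, inside some component $K$, from $\acl(a_0)$ to $\acl(a_2)$ that avoids the convex hull of the base. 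Hence $\acl(a_0)\cap K\neq\emptyset$, and only in such a component does Lemma~\ref{rmk:dependency} give that $\acl(a_0a_1)$ is independent from $\acl(a_0a_2)$ over $\acl(a_0)$. The paper exploits exactly this: it chooses such a geodesic of minimal length and uses the remaining two ampleness conditions to produce a shorter one. Some argument of this kind has to replace your global lemma.
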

\begin{proof}
We note that by weak elimination of imaginaries, it suffices to work with the real algebraic closure. 

Let $M$ be a sufficiently saturated model of $Th(FG)$, and let
$a,b\in M$ be distinct elements that belong to the same connected component. Then, by
Fact~\ref{FGFind}, $a$ forks with $b$ over $\emptyset$, while
Fact~\ref{FGAlc} yields $\acl(a)\cap\acl(b)=\emptyset$. Hence $Th(FG)$ is $1$-ample.

We now show that $Th(FG)$ is not $2$-ample. Suppose towards a contradiction that there exist tuples $\bar a_0,\bar a_1,\bar a_2 \in M$ witnessing that $Th(FG)$ is $2$-ample over some set of parameters $A$. Then the following conditions hold:
    \begin{enumerate}
        \item $acl(A\bar{a}_0\bar{a}_1) \cap acl(A\bar{a}_0 \bar{a}_2) = acl(A \bar{a}_0)$ \label{item1}
        \item $acl(A \bar{a}_0) \cap acl( A \bar{a}_1) = acl(A)$ \label{item2}
        \item  $\depen{\bar{a}_0}{A}{\bar{a}_2}$
        \label{item3}
        \item $\indep{\bar{a}_0}{A \bar{a}_1}{\bar{a}_2}$ \label{item4}
    \end{enumerate}

By \ref{item3}, there is a geodesic between $\acl(A\bar a_0)$ and
$\acl(A\bar a_2)$ that does not meet $Conv_{\mathrm{tree}}(A)$. We
choose such a geodesic, say $[p,q]$, of minimum length. By
\ref{item4}, every geodesic joining $\acl(A\bar a_0)$ and
$\acl(A\bar a_2)$ meets $Conv_{\mathrm{tree}}(A\bar a_1)$.

Let $C$ denote the connected component of $G_{\mathrm{tree}}(M)$
containing $[p,q]$. Since $[p,q]$ joins $\acl(A\bar a_0)$ to
$\acl(A\bar a_2)$, both $\acl(A\bar a_0)$ and $\acl(A\bar a_2)$ meet
$C$. Moreover, as $[p,q]$ meets
$Conv_{\mathrm{tree}}(A\bar a_1)$, the set $\acl(A\bar a_1)$ also
meets $C$. Finally, since $[p,q]$ is disjoint from
$Conv_{\mathrm{tree}}(A)$, the set $\acl(A)$ may either be disjoint from
$C$ or meet $C$ non-trivially. By Fact~\ref{FGAlc}, algebraic closure
is computed component-wise. Hence, replacing each of the algebraically
closed sets appearing in \ref{item1}--\ref{item4} by its intersection
with $C$, the four conditions continue to hold.

Let $[p_0,q_0]=[p,q]\cap Conv_{\mathrm{tree}}(A\bar a_1)$, and let
$r\in \acl(A\bar a_1)$ be the last element of $[p_0,q_0]$ when
travelling from $p$ to $q$. By \ref{item2}, and since $[p,q]$ is
disjoint from $Conv_{\mathrm{tree}}(A)$, we have
$r\notin\acl(A\bar a_0)$.

On the other hand, by \ref{item1} and
Corollary~\ref{IndAlgClosed}, the geodesic from $r$ to $q$ meets
$Conv_{\mathrm{tree}}(A\bar a_0)$. Let $s\in\acl(A\bar a_0)$ be the
first element of this intersection. Since $r\notin\acl(A\bar a_0)$,
the point $s$ lies strictly between $r$ and $q$. Therefore, the
subgeodesic $[s,q]$ is strictly shorter than $[p,q]$. As it joins
$\acl(A\bar a_0)$ to $\acl(A\bar a_2)$ and is contained in $[p,q]$, it
is still disjoint from $Conv_{\mathrm{tree}}(A)$, contradicting the
minimality of $[p,q]$.

\end{proof}

\bibliographystyle{plain}
\bibliography{biblio}

\end{document}